\newtheorem{preproof}{{\bf \indent Proof.}}
\newenvironment{proof}[1]{\begin{preproof}{\rm
               #1}\hfill{$\Box$}}{\end{preproof}}
\newtheorem{prop}{\bf\indent Proposition}[section]
\newtheorem{defn}[prop]{\bf\indent Definition}
\newtheorem{cor}[prop]{\bf\indent Corollary}
\newtheorem{example}[prop]{\bf\indent Example}
\newtheorem{thm}[prop]{\bf\indent Theorem}
\newtheorem{lem}[prop]{\bf\indent Lemma}
\title{\bf  \large On weakly $1$-absorbing prime ideals of commutative rings\thanks
{{\it Key Words}: $1$-absorbing prime ideal, Weakly $1$-absorbing prime ideal, Prime ideal, Weakly prime ideal.} \thanks
{\indent{~~2010 {\it Mathematics Subject Classification}: 13A15, 13C05.}}}
\author{{\normalsize  {\sc M. J. Nikmehr${}^{\mathsf{a}}$, {\sc R. Nikandish${}^{\mathsf{b}}$} and {\sc A. Yassine${}^{\mathsf{a}}$}  }
}\vspace{3mm}\\
{\footnotesize{${}^{\mathsf{a}}$\it Faculty of Mathematics, K.N. Toosi
University of Technology, }}\\
{\footnotesize{\rm P.O. BOX \rm{16315-1618}, Tehran, Iran}}\\
{\footnotesize{ $\mathsf{nikmehr@kntu.ac.ir}$}}\quad\quad
{\footnotesize{$\mathsf{yassine\_ali@email.kntu.ac.ir}$}}\\
{\footnotesize{${}^{\mathsf{b}}$\it Department of Mathematics, Jundi-Shapur University of Technology,}}\\
{\footnotesize{\rm P.O. BOX \rm{64615-334},
Dezful, Iran}}\\
{\footnotesize{ $\mathsf{r.nikandish@ipm.ir}$}}\\
{\footnotesize{$\mathsf{}$ }}}
\date{}
\begin{document}

\maketitle
\begin{abstract}
{
Let $R$ be a commutative ring with identity. In this paper, we introduce the concept of weakly $1$-absorbing prime ideals which is a generalization of weakly prime ideals. A proper ideal $I$ of $R$ is called weakly $1$-absorbing prime if for all nonunit elements $a,b,c \in R$ such that $0\neq abc \in I$, then either $ab \in I$ or $c \in I$. A number of results concerning weakly $1$-absorbing prime ideals and examples of weakly $1$-absorbing prime ideals are given.  It is proved that if $I$ is a weakly $1$-absorbing prime ideal of a ring $R$ and $0 \neq I_1I_2I_3 \subseteq I$ for some ideals $I_1, I_2, I_3$ of $R$ such that $I$ is free triple-zero with respect to $I_1I_2I_3$, then $ I_1I_2 \subseteq I$ or $I_3\subseteq I$. Among other things, it is shown that if $I$ is a weakly $1$-absorbing prime ideal of $R$ that is not $1$-absorbing prime, then $I^3 = 0$. Moreover, weakly $1$-absorbing prime ideals of PID's and Dedekind domains are characterized. Finally, we investigate commutative rings with the property that all proper ideals are weakly $1$-absorbing primes.}
\end{abstract}
\begin{center}{\section{Introduction
}}\end{center}
\par
We assume throughout this paper that all rings are commutative with identity. Let $R$ be a ring and $I$ be an ideal of $R$. The set of nilpotent elements of $R$, the set of zero-divisors of $R$, the set of integers, and integers modulo $n$ are denoted by $\sqrt{0}$, $Z(R)$, $\mathbb{Z}$ and $\mathbb{Z}_n$, respectively. By a proper ideal $I$ of $R$ we mean an ideal with $I\neq R$. A ring $R$ is called \textit{local} if it has a unique maximal ideal. A ring $R$ is called a \textit{reduced} ring if it has no non-zero nilpotent elements; i.e., $\sqrt{0} = 0$. For any undefined notation or terminology in commutative ring theory, we refer the reader to \cite{sha}.

In recent years, various generalizations of prime ideals have been studied by several authors. For instance, in 1978, Hedstrom and Houston \cite{Hedstrom} defined the strongly prime ideal, that is a proper ideal $P$ of $R$ such that for $a,b \in K$ with $ab \in P$, either $a \in P$ or $b \in P$ where $K$ is the quotient field of $R$. In 2003, Anderson and Smith \cite{Ahmet} introduced the notion of a weakly prime ideal, i.e., a proper ideal $P$ of $R$ with the property that for $a, b \in R$, $0 \neq ab \in P$ implies $a \in P$ or $b \in P$. So a prime ideal is weakly prime. In 2005, Bhatwadekar and Sharma \cite{Bhatwadekar} introduced the notion of almost prime ideal which is also a generalization of prime ideal. A proper ideal $I$ of an integral domain $R$ is said to be almost prime if for $a,b \in R$ with $ab \in I \setminus I^2$, then either $a \in I$ or $b \in I$, and it is clear that every weakly prime ideal is an almost prime ideal. Another generalization of  prime ideal is $2$-prime; Indeed, a nonzero proper ideal $I$ of $R$ is called  $2$-prime if whenever $a,b \in R$ and $ab \in I$, then $a^2 \in I$ or $b^2 \in I$ (See \cite{Beddani} and \cite{Nikandish} for more details). The notion of $2$-absorbing ideals was introduced and investigated in 2007 by Badawi \cite{Badawi}. A nonzero proper ideal $I$ of $R$ is called $2$-absorbing if whenever $a,b,c \in R$ and $abc \in I$, then $ab \in I$ or $ac \in I$ or $bc \in I$. In \cite{Darani}, Badawi and Darani extended the concept of weakly prime ideal to  weakly $2$-absorbing ideal. A proper ideal $I$ of $R$ is said to be a weakly $2$-absorbing ideal of $R$ if whenever $a,b,c \in R$ with $0 \neq abc \in I$ implies $ab \in I$ or $ac \in I$ or $bc \in I$. In \cite{Bad}, $1$-absorbing primary ideal was introduced and studied. A proper ideal $I$ of $R$ is called $1$-absorbing primary if for all nonunit $a,b,c \in R$ such that $abc \in I$, then either $ab \in I$ or $c \in \sqrt{I}$. Recall that $1$-absorbing prime ideals, as a generalization of prime ideals, were introduced and investigated in \cite{yassine}. In this paper, we extend the concepts of weakly prime ideal and $1$-absorbing primary ideal to  weakly $1$-absorbing prime ideal.

A proper ideal $I$ of $R$ is called \textit{(weakly) $1$-absorbing prime} if for all nonunit elements $a,b,c \in R$ such that $(0\neq) abc \in I$, then either $ab \in I$ or $c \in I$. Clearly, every weakly prime ideal is a weakly $1$-absorbing prime ideal. However, the converse is not true.

This paper is organized as follows. In section $2$, first we give the definition of weakly $1$-absorbing prime ideals (Definition \ref{dfn}). For nontrivial weakly $1$-absorbing prime ideals see Example \ref{ex}. Also, it is proved (Theorem \ref{3.5}) that if $I$ is a weakly $1$-absorbing prime ideal of $R$ that is not $1$-absorbing prime, then $I^3 = 0$. In Theorem \ref{3888} we give a condition under which every weakly $1$-absorbing prime ideal of $R$ is $1$-absorbing prime. Among other things, it is shown (Theorem \ref{min}) that the radical of a weakly $1$-absorbing prime ideal of a ring $R$ need not be a prime ideal of $R$. Finally, it is proved (Theorem \ref{fin}) that if $I$ is a weakly $1$-absorbing prime ideal of a ring $R$ and $0 \neq I_1I_2I_3 \subseteq I$ for some ideals $I_1, I_2, I_3$ of $R$ such that $I$ is free triple-zero with respect to $I_1I_2I_3$, then $ I_1I_2 \subseteq I$ or $I_3\subseteq I$. In Section $3$, we characterize rings with the property that all proper ideals are weakly $1$-absorbing prime.

\vspace*{1cm}

\begin{center}{\section{Weakly $1$-absorbing prime ideals
}}\end{center}

 In this section, the concept of weakly $1$-absorbing prime ideals is introduced and investigated. We start with the following definitions.

\begin{defn} \label{dfn}
Let $R$ be a ring. A proper ideal $I$ of $R$ is called weakly $1$-absorbing prime if for all nonunit elements $a,b,c \in R$ such that $0 \neq abc \in I$, then $ab \in I$ or $c \in I$.
\end{defn}
\begin{defn}
 Suppose that $I$ is a weakly $1$-absorbing prime ideal of a ring $R$ and $a,b,c \in R$ are nonunit elements.

$(1)$ We say $(a,b,c)$ is a triple-zero of $I$ if $abc = 0$, $ab \notin I$ and $c \notin I$.

$(2)$  Suppose that $I_1I_2I_3 \subseteq I$, for some ideals $I_1, I_2, I_3$ of $R$. We say $I$ is free triple-zero with respect to $I_1I_2I_3$, if $(a,b,c)$ is not a triple-zero of $I$, for every $a \in I_1, b \in I_2$, and $c \in I_3$.
\end{defn}

It is easy to see that if $I$ is a weakly $1$-absorbing prime ideal that is not $1$-absorbing prime, then there exists a triple-zero of $I$. Note that every weakly prime ideal is a weakly $1$-absorbing prime ideal, and every weakly $1$-absorbing prime ideal is a weakly $2$-absorbing prime ideal. The following  example provides  a weakly $1$-absorbing prime ideal that is not $1$-absorbing prime,  a weakly $2$-absorbing ideal that is not weakly $1$-absorbing prime and a weakly $1$-absorbing prime ideal that is not weakly prime.

\begin{example}\label{ex} {\rm
$(1)$ Consider the ideal $J = \{0, 4\}$  of $\mathbb{Z}_{8}$. Let  $R = \mathbb{Z}_8 (+) J$ and $I = \{(0, 0), (0, 4)\}$. It is easy to see that $I$ is a weakly $1$-absorbing prime ideal, since $abc \in I$ for some $a,b,c \in R \setminus I$ if and only
if $abc = (0, 0)$. But, $(2, 0)(2, 0)(2, 0) \in I$ and both $(4, 0) \notin I$ and $(2, 0) \notin I$, thus $I$ is a weakly $1$-absorbing prime ideal that is not $1$-absorbing prime.

 $(2)$ Suppose that $R = R_1 \times R_2 \times R_3$, where $R_1, R_2, R_3$ are fields and $I = R_1 \times \{0\} \times \{0\}$. One can easily see that the ideal $I$ is not weakly $1$-absorbing prime, since $(0, 0, 0)\neq (1, 0, 1)(1, 0, 1)(1, 1, 0) = (1, 0, 0)\in I$ and neither $(1, 0, 1)(1, 0, 1) = (1, 0, 1)\in I$ nor $(1, 1, 0)\in I$. But $I$ is a weakly $2$-absorbing ideal of $R$, by \cite[Theorem 3.5]{Darani}. Hence every weakly $2$-absorbing ideal need not be weakly $1$-absorbing prime.

$(3)$ Consider the ideal $J = \{0, 4, 8\}$ of the ring $\mathbb{Z}_{12}$. Clearly,
$J$ is a weakly $1$-absorbing prime ideal of $\mathbb{Z}_{12}$ that is not weakly prime.
 }
\end{example}
\begin{prop}
Let $R$ be a ring, $I$ a weakly $1$-absorbing prime ideal of $R$ and $c$ be a nonunit element of $R\setminus I$. Then $(I : c)$ is a weakly prime ideal of $R$.
\end{prop}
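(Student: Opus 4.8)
The plan is to verify directly that $(I:c)$ satisfies the definition of a weakly prime ideal. First, $(I:c)$ is proper: since $c\notin I$ we have $1\cdot c=c\notin I$, so $1\notin(I:c)$. Now suppose $a,b\in R$ with $0\neq ab\in(I:c)$; by definition this means $abc\in I$ while $ab\neq 0$, and the goal is to show $a\in(I:c)$ or $b\in(I:c)$, equivalently $ac\in I$ or $bc\in I$. If $a$ is a unit, then $bc=a^{-1}(abc)\in I$, so $b\in(I:c)$; symmetrically, if $b$ is a unit then $a\in(I:c)$. Hence I may assume that $a$ and $b$ are nonunits, while $c$ is a nonunit by hypothesis.

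The main case is $abc\neq 0$. Here I would apply the weakly $1$-absorbing prime hypothesis to the \emph{reordered} triple $(b,c,a)$: these are three nonunit elements with $0\neq bca=abc\in I$, so Definition \ref{dfn} yields $bc\in I$ or $a\in I$. In the first subcase $b\in(I:c)$; in the second, $a\in I$ forces $ac\in I$ since $I$ is an ideal, so $a\in(I:c)$. Either way the weakly prime implication holds, so the desired conclusion is established whenever the triple product is nonzero.

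The delicate case, and the step I expect to be the main obstacle, is $abc=0$ with $ab\neq 0$. Here the weakly $1$-absorbing prime property is unavailable, because it only governs \emph{nonzero} products: the triple $(b,c,a)$ may be a triple-zero of $I$, and then one cannot directly conclude $bc\in I$ or $a\in I$. The plan would be to treat this triple-zero configuration on its own, aiming to show that $ab\neq 0$ together with the ideal structure of $I$ still forces $ac\in I$ or $bc\in I$. This zero-product analysis—precisely the point where the ``weakly'' relaxation interacts with the passage to the residual $(I:c)$—is the heart of the argument and the place where genuine care is required.
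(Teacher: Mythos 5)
Your handling of the case $abc \neq 0$ is correct, and it is in substance the paper's entire proof (the paper just orders the triple as $a,c,b$: from $ac \notin I$ it concludes $b \in I \subseteq (I:c)$, while you order it as $b,c,a$; the difference is immaterial). The case you flag as delicate, $abc = 0$ while $ab \neq 0$, is a genuine gap in your proposal: you never close it, so your argument is incomplete. But you have put your finger on exactly the point where the paper's own proof breaks down. After assuming only $0 \neq ab \in (I:c)$, the paper writes ``As $0 \neq abc = acb \in I$\dots'', silently inferring $abc \neq 0$ from $ab \neq 0$ --- a non sequitur, since $c$ may annihilate $ab$.

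Moreover, this gap cannot be closed by any amount of care, because the proposition is false as stated, and the paper's own Example \ref{ex}(1) supplies a counterexample. Let $R = \mathbb{Z}_8 (+) J$ with $J = \{0,4\}$, $I = \{(0,0),(0,4)\}$, where $(x,m)(y,n) = (xy, xn + ym)$ and $(x,m)$ is a unit exactly when $x$ is odd. For any three nonunits the first coordinate of the product is divisible by $8$ and every term of the second coordinate is divisible by $16$, so the product is $(0,0)$; hence $I$ is (vacuously) weakly $1$-absorbing prime. Now take $c = (2,0)$, a nonunit not in $I$, and $a = b = (2,0)$. Then $ab = (4,0) \neq (0,0)$ and $abc = (0,0) \in I$, so $0 \neq ab \in (I:c)$; yet $ac = bc = (4,0) \notin I$, so neither $a$ nor $b$ lies in $(I:c)$. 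Since $(I:c)$ is proper (as $(1,0)c = (2,0) \notin I$), it is not a weakly prime ideal. So the triple-zero configuration you isolated as ``the heart of the argument'' is not merely delicate --- it is fatal, both to your plan and to the paper's claim. (What survives is the implication you did prove: the weakly prime condition for $(I:c)$ holds whenever $abc \neq 0$; and if $I$ is genuinely $1$-absorbing prime, the same computation shows $(I:c)$ is prime.)
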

\begin{proof}
{ Assume that $0 \neq ab \in (I : c)$ for some nonunit element $c \in R\setminus I$ such that $a\notin (I : c)$. We may assume that $a,b$ are nonunit elements of $R$. As $0 \neq abc = acb \in I$ and $ac \notin I$ and $I$ is a weakly $1$-absorbing prime ideal of $R$, we have $b \in I \subseteq (I : c)$. Hence $(I : c)$ is a weakly prime ideal of $R$.
}
\end{proof}
 Let $R$ be a local ring and $I$ be a weakly $1$-absorbing prime ideal of $R$ which is not $1$-absorbing prime. To prove  $I^3 = 0$, the following lemma is needed.

\begin{lem} \label{3.4}
 Let $I$ be a weakly $1$-absorbing prime ideal of a local ring $R$ and let $(a,b,c)$ be a triple-zero of $I$ for some nonunit elements $a,b,c \in R$. Then the following statements hold.

$(1)$ $abI = acI = bcI = 0$.

$(2)$ $aI^2 = bI^2 = cI^2 = 0$.
\end{lem}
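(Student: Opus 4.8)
The plan is to lean on the hypothesis that $R$ is local, so the nonunits form the maximal ideal $\mathfrak{m}$; in particular $I\subseteq\mathfrak{m}$, every element of $I$ is a nonunit, and any sum or product of nonunits is again a nonunit. This is what lets me freely ``perturb'' $a,b,c$ by elements of $I$ and still feed the resulting triples into the definition of a weakly $1$-absorbing prime ideal. Every step will be a proof by contradiction: assuming a product fails to vanish produces a \emph{nonzero} element of $I$ to which the definition applies, and I would choose each perturbation so that \emph{both} alternatives $xy\in I$ and $z\in I$ collide with $ab\notin I$ or with $c\notin I$. I expect the main obstacle to be that, unlike $ab$, the quantity $ac$ is not protected by the triple-zero hypothesis, so the naive argument for $acI=0$ leaves one branch unresolved and must be handled by a second application of the definition.

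For part $(1)$ I would first dispatch the two ``symmetric'' products. For $abI=0$, suppose $abi\neq0$ with $i\in I$ and apply the definition to $(a,b,c+i)$, whose product is $ab(c+i)=abi\neq0\in I$: then $ab\in I$ (against $ab\notin I$) or $c+i\in I$ (forcing $c\in I$, against $c\notin I$). For $bcI=0$, suppose $bci\neq0$ and use $(a+i,b,c)$, with product $(a+i)bc=bci$; the branch $(a+i)b\in I$ gives $ab=(a+i)b-ib\in I$, a contradiction, and $c\in I$ is excluded. The delicate case is $acI=0$: here the triple $(a,c,b+i)$ leaves the branch $ac\in I$ open, so instead I would argue in two stages. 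Assuming $aci\neq0$, the triple $(a,i,c)$ has product $aci\neq0\in I$ and, since $c\notin I$, yields $ai\in I$; then the triple $(a,b+i,c)$ has product $a(b+i)c=abc+aci=aci\neq0$ and, again since $c\notin I$, forces $a(b+i)=ab+ai\in I$. Subtracting $ai\in I$ leaves $ab\in I$, the desired contradiction.

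For part $(2)$ I would feed the vanishing products from part $(1)$ into suitable expansions. For $cI^2=0$, assume $cij\neq0$ and apply the definition to $(a+i,b+j,c)$: expanding $(a+i)(b+j)c$, the cross terms $acj$ and $bci$ vanish by part $(1)$, leaving $ijc=cij\neq0\in I$, so $c\notin I$ forces $(a+i)(b+j)\in I$; subtracting the terms $aj,ib,ij$ (all in $I$) isolates $ab\in I$, a contradiction. For $aI^2=0$, assume $aij\neq0$ and use $(a,b+i,c+j)$: the terms $abj$ and $aic$ vanish by part $(1)$, leaving $aij\neq0$, and both alternatives $a(b+i)=ab+ai\in I$ (giving $ab\in I$) and $c+j\in I$ (giving $c\in I$) are contradictory. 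The case $bI^2=0$ is identical after using the symmetry of the roles of $a$ and $b$ in the triple, via $(b,a+i,c+j)$. The one point I would be careful to verify throughout is that every perturbed factor ($c+i$, $a+i$, $b+j$, $c+j$) and each relevant product is genuinely a nonunit, which is precisely where locality is indispensable.
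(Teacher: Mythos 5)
Your proof is correct and takes essentially the same route as the paper: the same perturbed triples $(a,b,c+x)$, $(a+x,b,c)$, $(a,b+x,c)$ for part $(1)$ and triples of the form $(a,b+x,c+y)$ for part $(2)$, with locality invoked exactly where the paper invokes it, to guarantee that the perturbed factors are nonunits. The only (harmless) difference is your extra first stage in the $acI=0$ case: deriving $ai\in I$ from the triple $(a,i,c)$ is superfluous, since $ai\in I$ holds automatically because $i\in I$ and $I$ is an ideal.
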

\begin{proof}
{ $(1)$ Assume that $abI \neq 0$. Then there exists $x \in I$ such that $abx \neq 0$. Therefore $ab(c+x) \neq 0$. Since $ab \notin I$ and $I$ is a weakly $1$-absorbing prime ideal, $(c + x) \in I$, and hence $c \in I$, a contradiction. Thus $abI = 0$. Now suppose that $acI \neq 0$. Then there exists $x \in I$ such that $acx \neq 0$. Therefore $a(b+x)c \neq 0$. Since $c \notin I$ and $I$ is a weakly $1$-absorbing prime ideal, $a(b + x) \in I$, and so $ab \in I$, a contradiction. Thus $acI = 0$. Similarly, one can easily see that $bcI = 0$.

$(2)$ Assume that $axy \neq 0$ for some $x,y \in I$. By part $(1)$, $a(b + x)(c + y) = axy \neq 0$. Since $R$ is local, the set of nonunit elements of $R$ is an ideal of $R$. Therefore $(b + x), (c + y)$ are nonunit elements of $R$. This means that either $a(b + x) \in I$ or $c + y \in I$. Hence either $ab \in I$ or $c \in I$, a contradiction.
Thus $aI^2 = 0$. Similarly, one can easily show that  $bI^2 = cI^2 = 0$.
}
\end{proof}

\begin{thm} \label{3.5}
 Let $I$ be a weakly $1$-absorbing prime ideal of a local ring $R$ that is not $1$-absorbing prime. Then $I^3 = 0$.
\end{thm}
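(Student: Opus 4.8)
The plan is to argue by contradiction, building a single product that witnesses the weakly $1$-absorbing prime property while forcing a contradiction. Since $I$ is weakly $1$-absorbing prime but not $1$-absorbing prime, the remark preceding Lemma \ref{3.4} gives a triple-zero $(a,b,c)$ of $I$: that is, $a,b,c$ are nonunit, $abc=0$, $ab\notin I$, and $c\notin I$. By Lemma \ref{3.4} we then have the annihilation relations $abI=acI=bcI=0$ and $aI^2=bI^2=cI^2=0$, and these are exactly the identities I will feed into the computation. Suppose toward a contradiction that $I^3\neq 0$. Then there exist $x,y,z\in I$ with $xyz\neq 0$, and the whole argument will revolve around the element $(a+x)(b+y)(c+z)$.

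First I would check that $a+x$, $b+y$, and $c+z$ are nonunits, so that the definition of weakly $1$-absorbing prime can be applied to them. Because $R$ is local, its nonunits form the maximal ideal $\mathfrak{m}$; since $I$ is proper we have $I\subseteq\mathfrak{m}$, and since $a,b,c$ are nonunit they also lie in $\mathfrak{m}$, so each of $a+x,\,b+y,\,c+z$ lies in $\mathfrak{m}$ and is nonunit. Next I would expand $(a+x)(b+y)(c+z)$ into its eight terms $abc+abz+acy+ayz+bcx+bxz+cxy+xyz$ and eliminate all but one using the Lemma \ref{3.4} relations: $abc=0$ by the triple-zero hypothesis; $abz=0$, $acy=0$, $bcx=0$ from $abI=acI=bcI=0$ (each missing factor lies in $I$); and $ayz=0$, $bxz=0$, $cxy=0$ from $aI^2=bI^2=cI^2=0$ (each of $yz,xz,xy$ lies in $I^2$). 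What survives is exactly $(a+x)(b+y)(c+z)=xyz\neq 0$, and $xyz\in I^3\subseteq I$.

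Finally I would invoke the weakly $1$-absorbing prime property: since $0\neq(a+x)(b+y)(c+z)\in I$ with all three factors nonunit, either $(a+x)(b+y)\in I$ or $c+z\in I$. In the second case, $z\in I$ forces $c\in I$, contradicting that $(a,b,c)$ is a triple-zero. In the first case, expand $(a+x)(b+y)=ab+ay+xb+xy$; here $ay,\,xb\in I$ (as $I$ is an ideal and $x,y\in I$) and $xy\in I^2\subseteq I$, so $(a+x)(b+y)\in I$ yields $ab\in I$, again contradicting the triple-zero. Either way we reach a contradiction, so $I^3=0$. The only step requiring genuine care is the eight-term expansion: the main obstacle is the bookkeeping that matches each cross term to the correct annihilation relation from Lemma \ref{3.4}, together with the (easy but essential) verification that the shifted factors remain nonunit so that the definition is actually applicable — a point where locality of $R$ is used in a crucial way.
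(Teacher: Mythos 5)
Your proof is correct and follows essentially the same route as the paper's: contradiction via a triple-zero $(a,b,c)$ and elements $x,y,z\in I$ with $xyz\neq 0$, the identity $(a+x)(b+y)(c+z)=xyz$ obtained from Lemma \ref{3.4}, locality to ensure the shifted factors are nonunit, and the weakly $1$-absorbing prime property to force $ab\in I$ or $c\in I$. You merely spell out the eight-term expansion and the final absorption of cross terms, which the paper leaves implicit.
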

\begin{proof}
{  Suppose that $I$ is a weakly $1$-absorbing prime ideal of a ring $R$ that is not $1$-absorbing prime. Then there exists a triple-zero $(a,b,c)$ of $I$ for some nonunit elements $a,b,c \in R$. Suppose that $I^3 \neq 0$. Hence $xyz \neq 0$ for some $x,y,z \in I$. By Lemma \ref{3.4}, $(a + x)(b + y)(c + z) = xyz \neq 0$. Since $R$ is local, the set of nonunit elements of $R$ is an ideal of $R$. Therefore $(a+x), (b+y)$ and $(c+z)$ are nonunit elements. Hence either $(a+ x)(b+ y) \in I$ or $(c + z) \in I$, and so either $ab \in I$ or $c \in I$, a contradiction. Thus $I^3 = 0$.
}
\end{proof}

It is worth mentioning that if $I$ is an ideal of a ring $R$ such that $I^3 = 0$, then $I$ need not be a weakly $1$-absorbing prime ideal. For example, let $R = \mathbb{Z}/16\mathbb{Z}$ and $I = 8\mathbb{Z}/16\mathbb{Z}$. Then $I^3 = 0$, but
$\overline{0}\neq \overline{2}\cdot \overline{2}\cdot \overline{2} = \overline{8} \in I$ and $\overline{2},\overline{4}\notin I$.

Now, we state the following corollary.
\begin{cor} \label{36}
$(1)$ Let $I$ be a weakly $1$-absorbing prime ideal of a local ring $R$ that is not $1$-absorbing prime. Then $\sqrt{I} = \sqrt{0}$.

$(2)$  If $R$ is a reduced local ring and $I \neq 0$ is a proper ideal of $R$, then $I$ is a weakly $1$-absorbing prime ideal of $R$ if and only if $I$ is a $1$-absorbing prime ideal of $R$.

\end{cor}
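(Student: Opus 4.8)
The plan is to obtain both statements as consequences of Theorem \ref{3.5}, which guarantees that a weakly $1$-absorbing prime ideal of a local ring that is not $1$-absorbing prime must satisfy $I^3 = 0$. This single fact does essentially all the work.

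For part $(1)$, I would first record the trivial inclusion $\sqrt{0} \subseteq \sqrt{I}$, which holds in any ring because $0 \in I$: if $x$ is nilpotent, say $x^n = 0$, then $x^n \in I$, so $x \in \sqrt{I}$. For the reverse inclusion I would take an arbitrary $x \in \sqrt{I}$, so that $x^n \in I$ for some positive integer $n$. Cubing this element and invoking $I^3 = 0$ from Theorem \ref{3.5} gives $x^{3n} = (x^n)^3 \in I^3 = 0$, whence $x \in \sqrt{0}$. This yields $\sqrt{I} \subseteq \sqrt{0}$, and combined with the first inclusion the desired equality $\sqrt{I} = \sqrt{0}$ follows.

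For part $(2)$, the backward implication is immediate: the defining condition of a $1$-absorbing prime ideal is strictly stronger than that of a weakly $1$-absorbing prime ideal (the latter only constrains products with $abc \neq 0$), so every $1$-absorbing prime ideal is automatically weakly $1$-absorbing prime. For the forward implication I would argue by contradiction. Suppose $I$ is weakly $1$-absorbing prime but not $1$-absorbing prime. Since $R$ is local, Theorem \ref{3.5} applies and yields $I^3 = 0$. Then every $y \in I$ satisfies $y^3 \in I^3 = 0$, so each element of $I$ is nilpotent; because $R$ is reduced this forces $I = 0$, contradicting the standing hypothesis $I \neq 0$. Therefore $I$ must be $1$-absorbing prime, and the two notions coincide.

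Since the entire argument rests on Theorem \ref{3.5}, there is no genuine obstacle to overcome; the only point requiring a little care is in part $(2)$, where one must keep straight that it is the combination of the weaker ``weakly'' hypothesis with the reduced and local conditions that collapses the nilpotent ideal $I^3 = 0$ down to the zero ideal, rather than any independent feature of $I$ itself.
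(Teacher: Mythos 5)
Your proof is correct and follows exactly the route the paper intends: the corollary is stated as an immediate consequence of Theorem \ref{3.5} (the paper gives no separate proof), and your derivation of $\sqrt{I}=\sqrt{0}$ from $I^3=0$ and the collapse of $I$ to $0$ in the reduced case is precisely that intended argument. No gaps.
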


\begin{thm} \label{nwe}
 Let $I$ be a weakly $1$-absorbing prime ideal of a local ring $R$ that is not $1$-absorbing prime. Then the following statements hold.

$(1)$ If $w \in \sqrt{0}$, then either $w^2 \in I$ or $w^2I = wI^2 = \{0\}$.

$(2)$ $\sqrt{0}^2I^2 = \{0\}$.
\end{thm}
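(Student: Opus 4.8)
The standing hypotheses hand us a concrete object to work with: since $I$ is weakly $1$-absorbing prime but not $1$-absorbing prime, there is a triple-zero $(a,b,c)$, so $a,b,c$ are nonunits with $abc=0$, $ab\notin I$ and $c\notin I$; note also $a,b\notin I$, since $a\in I$ would force $ab\in I$. Throughout I would lean on Lemma \ref{3.4} and Theorem \ref{3.5}, i.e. on the annihilator relations $pq\cdot I=pr\cdot I=qr\cdot I=0$ and $pI^2=qI^2=rI^2=0$ valid for \emph{any} triple-zero $(p,q,r)$, together with $I^3=0$. Because $R$ is local, the nonunits form the maximal ideal and every element of $\sqrt{0}$ and of $I$ is a nonunit; this is what lets me perturb coordinates of a triple by elements of $\sqrt{0}$ or $I$ without leaving the set of nonunits.

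For part $(1)$ I fix $w\in\sqrt{0}$ and assume $w^2\notin I$, aiming at $w^2I=wI^2=0$. The plan is to manufacture a triple-zero whose first two coordinates multiply to $w^2$ and then quote Lemma \ref{3.4}. The clean case is $w^2c=0$: then $(w,w,c)$ is a triple-zero (its first two coordinates give $w^2\notin I$, and $c\notin I$), so Lemma \ref{3.4}$(1)$ yields $w^2I=0$ and Lemma \ref{3.4}$(2)$ yields $wI^2=0$ at once. If instead $w^2c\neq 0$, I first observe $w^2c\notin I$: otherwise $0\neq w^2c\in I$ with $w^2\notin I$ and $c\notin I$ would contradict the defining property applied to the nonunits $w,w,c$. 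Now $w^2c$ is nilpotent, hence a nonunit, and $ab\cdot(w^2c)=w^2(abc)=0$, so $(a,b,w^2c)$ is a fresh triple-zero; feeding this, together with the perturbed triples $(w,w,c+x)$ for $x\in I$, into Lemma \ref{3.4} is what I expect to pin down $w^2I=0$ and $wI^2=0$ in this remaining case.

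For part $(2)$ I reduce to showing $uvxy=0$ for $u,v\in\sqrt{0}$ and $x,y\in I$, since such products generate $\sqrt{0}^2I^2$. Part $(1)$ does most of the work: if $u^2\notin I$ then $uI^2=0$, so $uvxy=v(uxy)=0$, and symmetrically if $v^2\notin I$. When $u^2,v^2\in I$ I apply part $(1)$ to $u+v\in\sqrt{0}$ (and to $u-v$): if $(u+v)^2\notin I$ then $(u+v)I^2=0$ gives $uxy=-vxy$, whence $uvxy=v(uxy)=-v^2xy\in I\cdot I^2=I^3=0$. The genuinely residual configuration is the one in which $u^2$, $v^2$ and $(u\pm v)^2$ all lie in $I$; there I would test the triple $(u,v,c)$: if $0\neq uvc\in I$ the defining property forces $uv\in I$ (as $c\notin I$), so $uvxy\in I\cdot I^2=0$; the case $uvc=0$ is dispatched by the perturbation $(u,v,c+xy)$; and $uvc\notin I$ is handled through the auxiliary triple-zero $(a,b,uvc)$ exactly as in part $(1)$.

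The main obstacle in both parts is precisely the branch where the natural auxiliary product ($w^2c$ in part $(1)$, or $uvc$ and the polarization cross term $2uv$ in part $(2)$) is nonzero and sits outside $I$. There the defining property cannot be invoked directly, because the product in question fails to land in $I$; one must instead recognize the right element as the third coordinate of a new triple-zero and then combine the annihilator identities of Lemma \ref{3.4} for several triple-zeros simultaneously. Keeping track of which perturbed elements remain nonunits (via locality) and of which products genuinely fall into $I$, so that the definition is legitimately applied, is where the real care is needed.
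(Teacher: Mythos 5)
Your easy cases are correct: when $w^2c=0$, the triple $(w,w,c)$ really is a triple-zero and Lemma \ref{3.4} gives $w^2I=wI^2=0$ at once; and in part $(2)$ the reductions via part $(1)$ applied to $u,v,u\pm v$ together with $I^3=0$ (Theorem \ref{3.5}), as well as the perturbation $(u,v,c+xy)$ in the subcase $uvc=0$, all check out. But the case you yourself flag as ``the main obstacle'' is genuinely unproved, and the tools you propose cannot close it. In part $(1)$, when $w^2c\neq 0$ (hence $w^2c\notin I$), the perturbed triples $(w,w,c+x)$ are useless: their product $w^2c+w^2x$ lies \emph{outside} $I$ (since $w^2c\notin I$ while $w^2x\in I$), so the defining property cannot be invoked, and the product is nonzero, so it is not a triple-zero either. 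The auxiliary triple-zero $(a,b,w^2c)$ is a correct observation, but Lemma \ref{3.4} applied to it yields only relations with $a$, $b$ or $c$ attached, namely $abI=a(w^2c)I=b(w^2c)I=0$ and $aI^2=bI^2=(w^2c)I^2=0$; none of these isolates $w^2I$ or $wI^2$. To learn anything about $w^2x$ you need a nonunit $z\notin I$ with $w^2z\in I$, and in this case $c$ is precisely not such an element; you supply no other candidate. The paper's missing idea is to build $z$ from $w$ itself: letting $n$ be the nilpotency index of $w$, take $z=x+w^{n-2}$, so that $w^2z=w^2x\neq 0$ lies in $I$; then $z\in I$ would force $w^{n-2}\in I$ and hence, by a descent on exponents, $w^2\in I$, so the defining property applied to $(w,w,z)$ gives the contradiction $w^2\in I$. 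A similar perturbation $(w,w+x,w^{m-2}+y)$ settles $wI^2=0$.

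Part $(2)$ inherits the same hole in your residual configuration $u^2,v^2\in I$, $uvc\notin I$: you defer to ``the auxiliary triple-zero $(a,b,uvc)$ exactly as in part $(1)$,'' but that branch of part $(1)$ is the one that was never proved, and Lemma \ref{3.4} on $(a,b,uvc)$ again produces only relations contaminated by $a,b,c$. The paper's fix is to change the third element: since $u^2,v^2\in I$, one has $uv(u+v)=u^2v+uv^2\in I$ \emph{automatically}, so for the triple $(u,v,u+v)$ the problematic case ``product outside $I$'' cannot occur. Then either $u+v\in I$, whence $uv=(u+v)v-v^2\in I$; or $uv(u+v)\neq 0$, whence the defining property gives $uv\in I$; or $uv(u+v)=0$ with $uv\notin I$ and $u+v\notin I$, whence $(u,v,u+v)$ is a triple-zero and Lemma \ref{3.4}$(1)$ gives $uvI=0$. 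In every branch $uvxy=0$. Your instinct to perturb and to hunt for new triple-zeros is the right one, but the entire difficulty lies in choosing the third coordinate so that the resulting product is forced to lie in $I$ or vanish; $c$, $w^2c$ and $uvc$ do not accomplish that, while $x+w^{n-2}$ and $u+v$ do.
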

\begin{proof}
{$(1)$ Suppose that $w \in \sqrt{0}$ such that $w^2I \neq \{0\}$. Suppose also that $n$ is the least positive integer such that
$w^n = 0$. Then $n \geq 3$ and $w^2(x + w^{n-2}) = w^2x \neq 0$ for some $x \in I$. Therefore,
either $w^2 \in I$ or $(x + w^{n-2})\in I$, since $R$ is local (which means that $(x + w^{n-2})$ is nonunit) and $I$ is weakly $1$-absorbing prime. If $(x + w^{n-2})\in I$, then $w^{n-2}\in I$, and so $w^{2}\in I$, as $w^{n-4}\notin I$. Hence $w^2 \in I$. Thus either $w^2 \in I$ or $w^2I = \{0\}$, for every $w \in \sqrt{0}$. Next, we show that $wI^2 = \{0\}$. Suppose that $wI^2 \neq \{0\}$ and $w^2 \notin I$. Then $w^2I = \{0\}$ and $wxy \neq 0$ for some $x,y\in I$. Let $m$ be the least positive integer such that $w^m = 0$. This means that $m \geq 3$ and $w^2I = 0$. Therefore $w(w + x)(w^{m-2} + y) = wxy \neq 0$. But $R$ is local (which means that $(y + w^{n-2})$ is non unit) and $I$ is weakly $1$-absorbing prime, hence either $w^2 \in I$ or $w^{m-2}\in I$,  a contradiction. Thus $wI^2 = \{0\}$.

$(2)$ Suppose that $a,b \in \sqrt{0}$. It follows from part $(1)$ that if either $a^2 \notin I$ or $b^2 \notin I$, then $abI^2 = \{0\}$. Hence assume that $a^2 \in I$ and $b^2 \in I$, which means that $ab(a + b) \in I$. We distinguish two cases, according as $(a,b,a + b)$ is
a triple-zero of $I$ or not. If $(a,b,a + b)$ is a triple-zero of $I$, then $abI = \{0\}$, by Lemma \ref{3.4} (1), and thus $\sqrt{0}^2I^2 = \{0\}$. If $(a,b,a+b)$ is not a triple-zero of $I$, then it is easily seen that $ab \in I$, and so $\sqrt{0}^2I^2 = \{0\}$, by Theorem \ref{3.5}.
}
\end{proof}
Corollary \ref{36} and  Theorem \ref{nwe} lead to the following corollary.
\begin{cor}
Let $I,J,K$ be weakly $1$-absorbing prime ideals of a local ring $R$ such that none of them is $1$-absorbing prime.  Then $I^2JK = IJ^2K = IJK^2 = I^2J^2 = I^2K^2 = J^2K^2 = \{0\}$.
\end{cor}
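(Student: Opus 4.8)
The plan is to reduce every one of the six products to an instance of the identity $\sqrt{0}^2 X^2 = \{0\}$ supplied by Theorem \ref{nwe}(2), with $X$ ranging over $I,J,K$. First I would record the containments $I, J, K \subseteq \sqrt{0}$. Indeed, each of $I, J, K$ is a weakly $1$-absorbing prime ideal of the local ring $R$ that is not $1$-absorbing prime, so Corollary \ref{36}(1) gives $\sqrt{I} = \sqrt{J} = \sqrt{K} = \sqrt{0}$; since every ideal is contained in its radical, this yields $I, J, K \subseteq \sqrt{0}$, and hence $IJ, IK, JK, J^2, K^2 \subseteq \sqrt{0}^2$ as well.

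Next, applying Theorem \ref{nwe}(2) separately to the three hypotheses gives $\sqrt{0}^2 I^2 = \sqrt{0}^2 J^2 = \sqrt{0}^2 K^2 = \{0\}$. Now each target product contains a perfect square $X^2$ with $X \in \{I, J, K\}$, while the remaining two factors form a product already lying in $\sqrt{0}^2$; commutativity and associativity of ideal multiplication then let me regroup and apply the relevant vanishing identity. For the three ``mixed'' products, grouping $I^2 J K = I^2 (JK)$ with $JK \subseteq \sqrt{0}^2$ gives $I^2 JK \subseteq I^2 \sqrt{0}^2 = \{0\}$; likewise $I J^2 K = (IK) J^2 \subseteq \sqrt{0}^2 J^2 = \{0\}$ and $I J K^2 = (IJ) K^2 \subseteq \sqrt{0}^2 K^2 = \{0\}$. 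The three ``double-square'' products are even more direct, since $J^2, K^2 \subseteq \sqrt{0}^2$: we get $I^2 J^2 = I^2 (J^2) \subseteq I^2 \sqrt{0}^2 = \{0\}$, and similarly $I^2 K^2 = \{0\}$ and $J^2 K^2 = \{0\}$.

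There is essentially no obstacle here: the substantive work has already been carried out in Corollary \ref{36} and Theorem \ref{nwe}, and the present corollary is a bookkeeping exercise that distributes the single identity $\sqrt{0}^2 X^2 = \{0\}$ across the six products. The only point requiring a moment's care is confirming that $I, J, K$ genuinely sit inside $\sqrt{0}$, which is exactly where the hypothesis ``none of them is $1$-absorbing prime'' is used, via Corollary \ref{36}(1); once this is in hand, every factorization of a target product into some $X^2$ times an ideal contained in $\sqrt{0}^2$ is immediate.
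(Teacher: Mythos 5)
Your proof is correct and is essentially the paper's own argument: the paper states this corollary without a separate proof, noting only that it follows from Corollary \ref{36} and Theorem \ref{nwe}, which is exactly your route --- Corollary \ref{36}(1) gives $I,J,K\subseteq\sqrt{0}$, Theorem \ref{nwe}(2) gives $\sqrt{0}^{2}X^{2}=\{0\}$ for each $X\in\{I,J,K\}$, and each of the six products then sits inside some $\sqrt{0}^{2}X^{2}$.
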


 If $I$ is a proper ideal of $R$, it is well known that $\sqrt{I}$ is a prime ideal of $R$ if and only if $\sqrt{I}$ is a primary ideal of $R$. In the following result, we give a condition under which every weakly $1$-absorbing prime ideal of $R$ is $1$-absorbing prime.

\begin{thm} \label{3888}
Suppose that $I$ is a proper ideal of $R$ such that $\sqrt{0} \subseteq I$ and $\sqrt{0}$ is a prime (primary) ideal of $R$. Then $I$ is a weakly $1$-absorbing prime ideal of $R$ if and only if $I$ is a $1$-absorbing prime ideal of $R$.
\end{thm}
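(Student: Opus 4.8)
The plan is to prove the two implications separately, and to observe at the outset that the substance lies entirely in one of them. The reverse implication is immediate from the definitions: if $I$ is $1$-absorbing prime, then for all nonunit $a,b,c \in R$ with $abc \in I$ one already has $ab \in I$ or $c \in I$, and in particular this holds whenever $0 \neq abc \in I$, so $I$ is weakly $1$-absorbing prime. No hypothesis on $\sqrt{0}$ is used here. The content of the theorem therefore sits in the forward implication, where the assumptions $\sqrt{0} \subseteq I$ and $\sqrt{0}$ prime come into play.

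For the forward implication I would argue by contradiction. Suppose $I$ is weakly $1$-absorbing prime but \emph{not} $1$-absorbing prime. As observed just after the definition of a triple-zero, this forces the existence of a triple-zero $(a,b,c)$ of $I$, i.e.\ nonunit elements with $abc = 0$, $ab \notin I$, and $c \notin I$. The key observation is that $abc = 0 \in \sqrt{0}$; writing the product as $(ab)\cdot c$ and using that $\sqrt{0}$ is a prime ideal, I obtain $ab \in \sqrt{0}$ or $c \in \sqrt{0}$. Since $\sqrt{0} \subseteq I$, this yields $ab \in I$ or $c \in I$, directly contradicting the definition of a triple-zero. Hence no triple-zero of $I$ can exist, and therefore $I$ is $1$-absorbing prime.

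It remains to dispose of the parenthetical ``(primary)'' alternative, which I would simply reduce to the prime case. Because $\sqrt{0}$ is always a radical ideal (that is, $\sqrt{\sqrt{0}} = \sqrt{0}$), the well-known equivalence recalled immediately before the statement---$\sqrt{J}$ is prime if and only if $\sqrt{J}$ is primary---applies with $J = (0)$, so $\sqrt{0}$ being primary is equivalent to $\sqrt{0}$ being prime; the argument of the previous paragraph then applies without change.

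I do not expect a genuine obstacle here: the entire proof hinges on the single observation that the zero product arising from a triple-zero lies in $\sqrt{0}$, after which primeness of $\sqrt{0}$ does all the work. It is worth emphasizing that, unlike Lemma \ref{3.4} and Theorem \ref{3.5}, no local hypothesis on $R$ is needed, so those results are not invoked. The only point demanding a little care is confirming the prime/primary equivalence for $\sqrt{0}$, which is routine once one notes that $\sqrt{0}$ is its own radical.
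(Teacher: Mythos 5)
Your proof is correct and follows essentially the same route as the paper's: both hinge on the single observation that a zero product $abc=0$ lies in $\sqrt{0}$, so primeness of $\sqrt{0}$ forces $ab\in\sqrt{0}\subseteq I$ or $c\in\sqrt{0}\subseteq I$. Your phrasing via non-existence of a triple-zero is just the contrapositive of the paper's direct case split on $xyz\neq 0$ versus $xyz=0$, and your reduction of the primary case to the prime case (using that $\sqrt{0}$ is its own radical) is a correct handling of a point the paper leaves implicit.
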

\begin{proof}
{  Suppose that $I$ is a weakly $1$-absorbing prime ideal of the  ring $R$ and $xyz \in I$ for some nonunit elements $x,y,z \in R$. If $xyz \neq 0$, then either $xy \in I$ or $z \in I$. Therefore assume that $xyz = 0$ and $z \notin I$. Since $xyz = 0 \in \sqrt{0}$ which is a prime ideal of $R$, we conclude that $xy \in \sqrt{0} \subseteq I$. Thus $I$ is a weakly $1$-absorbing prime ideal of $R$ if and only if $I$ is a $1$-absorbing prime ideal of $R$.}
\end{proof}

In the following result, we give a condition under which a weakly $1$-absorbing prime ideal of $R$ is not $1$-absorbing prime.

\begin{thm}
Suppose that $I$ is a weakly $1$-absorbing prime ideal of a local ring $R$ and $\{0\}$ has a triple-zero $(a,b,c)$ for some nonunit elements $a,b,c \in R$ such that $ab \notin \sqrt{0}$. Then $I$ is not a $1$-absorbing prime ideal of $R$ if and only if $I \subseteq \sqrt{0}$.
\end{thm}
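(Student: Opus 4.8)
The plan is to establish the two implications separately. The forward one will be essentially immediate from the results already proved, and the entire content lies in the converse, where the triple-zero of $\{0\}$ must be turned into an obstruction for $I$.

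First I would treat the implication ``$I$ not $1$-absorbing prime $\Rightarrow I \subseteq \sqrt 0$''. Here the triple-zero hypothesis plays no role: since $I$ is weakly $1$-absorbing prime but not $1$-absorbing prime, Corollary \ref{36}$(1)$ applies and yields $\sqrt I = \sqrt 0$. As $I \subseteq \sqrt I$ always holds, we obtain $I \subseteq \sqrt 0$ at once.

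For the converse I would assume $I \subseteq \sqrt 0$ and feed in the given triple-zero $(a,b,c)$ of $\{0\}$, so that $a,b,c$ are nonunits with $abc = 0$, $ab \neq 0$, $c \neq 0$ and $ab \notin \sqrt 0$. The idea is that $(a,b,c)$ should itself witness that $I$ fails to be $1$-absorbing prime. Indeed $abc = 0 \in I$ and all three entries are nonunits, so it remains only to check that $ab \notin I$ and $c \notin I$. The first is free: since $I \subseteq \sqrt 0$ and $ab \notin \sqrt 0$, certainly $ab \notin I$. This already shows that every $z$ with $abz \in I$ is automatically a nonunit (a unit $z$ would force $ab \in I$), so the search for a witness reduces to finding such a $z$ lying outside $I$, that is, to showing $(I:ab) \not\subseteq I$.

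The main obstacle, then, is the second requirement $c \notin I$ (equivalently $(I:ab)\not\subseteq I$): a priori the hypothesis only guarantees that $(a,b,c)$ is a triple-zero of $\{0\}$, i.e.\ that $c \neq 0$, not that $c$ avoids $I$. I would attack this by contradiction, assuming $c \in I \subseteq \sqrt 0$ and trying to exploit the tension between $ab \notin \sqrt 0$ and $abc = 0$ to relocate the obstruction to an element of $(I:ab)$ outside $I$. This is exactly the step I expect to be delicate, since it is here that the strength of the hypothesis $ab \notin \sqrt 0$ must be converted into a genuine failure of the $1$-absorbing prime property; once $c \notin I$ is secured, the triple $(a,b,c)$ completes the proof immediately.
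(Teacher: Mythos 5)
Your forward implication is exactly the paper's: Corollary \ref{36}(1) gives $\sqrt{I} = \sqrt{0}$, hence $I \subseteq \sqrt{I} = \sqrt{0}$. The gap is in the converse, and you have located it yourself without closing it: your proposal never proves $c \notin I$; it only announces an intended proof by contradiction that you ``expect to be delicate.'' A plan to exploit ``the tension between $ab \notin \sqrt{0}$ and $abc = 0$'' is not an argument, so as written the converse --- which is the entire content of the theorem beyond Corollary \ref{36} --- remains unproved. For comparison, the paper's own proof disposes of this step by bald assertion (``Then $ab \notin I$ and $c \notin I$''), justifying only the first half, namely $ab \notin I$, which follows from $ab \notin \sqrt{0}$ and $I \subseteq \sqrt{0}$ just as you say.

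Moreover, the step you postponed cannot be filled in: the claim $c \notin I$, and with it the stated converse, is false in general. Take the local ring $R = k[[x,y]]/(xy, y^2)$ and $I = yR$. Every element of $R$ is uniquely $f(x) + \alpha y$ with $f \in k[[x]]$ and $\alpha \in k$; the nonunits are those with $f(0) = 0$; and $\sqrt{0} = yR = I$. A product of three nonunits $a = f_1 + \alpha_1 y$, $b = f_2 + \alpha_2 y$, $c = f_3 + \alpha_3 y$ equals $f_1 f_2 f_3 \in k[[x]]$, which lies in $I$ only if $f_1 f_2 f_3 = 0$, i.e.\ only if some $f_i = 0$; in that case either $ab = f_1 f_2 = 0 \in I$ or $c = \alpha_3 y \in I$. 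Hence $I$ is $1$-absorbing prime (in particular weakly $1$-absorbing prime), the triple $(x, x, y)$ is a triple-zero of $\{0\}$ with $x\cdot x = x^2 \notin \sqrt{0}$, and $I \subseteq \sqrt{0}$ --- yet $I$ \emph{is} $1$-absorbing prime. So your instinct that this is ``exactly the delicate step'' was correct, but no amount of delicacy will rescue it from the stated hypotheses: one must strengthen them, e.g.\ require the triple-zero of $\{0\}$ to satisfy $c \notin \sqrt{0}$ as well as $ab \notin \sqrt{0}$. With that extra hypothesis, $c \notin I$ follows from $I \subseteq \sqrt{0}$ exactly as for $ab$, and your argument (and the paper's) completes immediately.
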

\begin{proof}
{  Suppose that $I$ is not a $1$-absorbing prime ideal of $R$. Hence, by part $(1)$ of Corollary \ref{36}, $I \subseteq \sqrt{0}$. Conversely, suppose that $I \subseteq \sqrt{0}$ and $\{0\}$ has a triple-zero $(a,b,c)$ for some nonunit elements $a,b,c \in R$ such that $ab \notin \sqrt{0}$. Then $ab \notin I$ and $c \notin I$. Hence $(a,b,c)$ is a triple-zero of $I$, and thus $I$ is not a $1$-absorbing prime ideal of $R$.}
\end{proof}

One can easily see that if $I$ is a $1$-absorbing prime ideal of a ring $R$, then there exists just one prime ideal of
$R$ that is minimal over $I$. In the following result, we show that for every positive integer $n \geq 2$, there exist a ring $R$ and a nonzero weakly $1$-absorbing prime ideal $I$ of $R$ such that there are exactly $n$ prime ideals of $R$
that are minimal over $I$.

\begin{thm} \label{min}
Suppose that $n \geq 2$ is a positive integer. Then there exist a ring $R$ and a nonzero weakly $1$-absorbing prime ideal $I$ of $R$ such that $I$ has exactly $n$ minimal prime ideals of $R$.
\end{thm}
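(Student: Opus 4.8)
The plan is to build a \emph{genuinely weak} example: since any $1$-absorbing prime ideal has a unique minimal prime (as noted just before the statement), an ideal with $n\ge 2$ minimal primes is automatically not $1$-absorbing prime, so I must produce a weakly $1$-absorbing prime ideal whose triple-products of nonunits are forced to vanish. The obstruction to having many minimal primes over a \emph{nonzero} ideal is that, in a reduced ring, a nonzero ideal meets the complement of some minimal prime and thus kills that component; the standard way around this is to manufacture nilpotents by idealization. Concretely, I would fix a local ring $S$ with maximal ideal $\mathfrak m$, residue field $k=S/\mathfrak m$, and exactly $n$ minimal primes, form the trivial extension $R=S(+)k$, and set $I=0(+)k$.

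For the base ring I would take $S=F[[x_1,\dots,x_n]]/(x_ix_j:\,i\neq j)$ over a field $F$, which is local with $\mathfrak m=(x_1,\dots,x_n)$. First I would check that $S$ is reduced: a nilpotent has zero constant term, and since all mixed monomials $x_ix_j$ vanish, powering kills cross terms and reduces to the domains $F[[x_i]]$ on each axis. Then I would identify the minimal primes as $P_i=(x_j:\,j\neq i)$, verifying that $S/P_i\cong F[[x_i]]$ is a domain, that $\bigcap_i P_i=0$, and that the $P_i$ are pairwise incomparable (for $n\ge2$ one has $x_i\in P_{i'}\setminus P_i$ whenever $i\neq i'$); hence $S$ has \emph{exactly} $n$ minimal primes.

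Next I would invoke the standard idealization fact that $P\mapsto P(+)k$ is an inclusion-preserving bijection $\mathrm{Spec}(S)\to\mathrm{Spec}(R)$, together with the observation that $0(+)k$ is nil and so lies in every prime of $R$. Consequently every prime of $R$ contains $I=0(+)k$, and the minimal primes over $I$ are exactly $P_1(+)k,\dots,P_n(+)k$; that is, $I$ has precisely $n$ minimal primes. Moreover $I\neq 0$ since $k\neq 0$, and $I\neq R$ since $(1,0)\notin I$.

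Finally I would verify that $I$ is weakly $1$-absorbing prime, which I expect to be immediate once the module is chosen to be annihilated by $\mathfrak m$. For nonunits $\alpha=(s_1,m_1),\beta=(s_2,m_2),\gamma=(s_3,m_3)$ of $R$ (so each $s_i\in\mathfrak m$) one computes $\alpha\beta\gamma=(s_1s_2s_3,\;s_1s_2m_3+s_1s_3m_2+s_2s_3m_1)$; since $\mathfrak m\,k=0$, every mixed term vanishes and $\alpha\beta\gamma=(s_1s_2s_3,0)$. If this lies in $I$ then $s_1s_2s_3=0$, whence $\alpha\beta\gamma=(0,0)=0$, so no triple of nonunits yields a \emph{nonzero} element of $I$ and the defining implication holds vacuously. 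The main obstacle is therefore not the weak-$1$-absorbing verification but the bookkeeping in the second step, namely confirming that $S$ is reduced with no spurious minimal primes and that the $n$ listed primes are genuinely minimal and pairwise incomparable, together with correctly quoting the idealization spectrum correspondence.
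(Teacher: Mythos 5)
Your proof is correct, but it takes a genuinely different route from the paper's --- and here the difference matters more than usual, because the paper's own construction appears to be flawed while yours is not. Both arguments use an idealization with $I = 0(+)M$, so that $I$ is nonzero, nilpotent (hence contained in every prime), and the minimal primes over $I$ correspond to the minimal primes of the base ring via $P \mapsto P(+)M$. The paper takes the \emph{non-local} base $D = \mathbb{Z}_8 \times \cdots \times \mathbb{Z}_8$ ($n$ factors) and the module $M = \{\overline{0},\overline{4}\}$ with $D$ acting through the first coordinate, and then asserts that $abc \in I$ with $a,b,c \in R\setminus I$ forces $abc = 0$; this assertion fails. For $n=2$ take $a = b = ((1,0),\overline{0})$ and $c = ((0,1),\overline{4})$: these are nonunits outside $I$, with $ab = ((1,0),\overline{0}) \notin I$ and $c \notin I$, yet $abc = ((0,0),\overline{4})$ is a \emph{nonzero} element of $I$, so the paper's ideal is not even weakly $1$-absorbing prime. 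The source of the failure is that $D$ is not local, so $R$ has nonunits (such as $((1,0),\overline{0})$) that act invertibly on $M$. Your construction is engineered precisely to exclude this: the base ring $S = F[[x_1,\dots,x_n]]/(x_i x_j : i \neq j)$ is local and the module $k = S/\mathfrak{m}$ is annihilated by $\mathfrak{m}$, so every product of two nonunits of $R = S(+)k$ already has zero module component, and a triple product of nonunits lying in $I = 0(+)k$ must equal $(0,0)$; the weakly $1$-absorbing prime condition therefore holds vacuously, and the minimal-prime count reduces to the correct and easily checked facts that $S$ has exactly the $n$ pairwise incomparable minimal primes $P_i = (x_j : j \neq i)$ and $\bigcap_i P_i = 0$. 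What your approach buys is an airtight vacuity argument (local base ring plus a module killed by the maximal ideal); indeed, it can be read as a repair of the paper's proof of this theorem.
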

\begin{proof}
{ Suppose that $n \geq 2$ is a positive integer and $D = \mathbb{Z}_8 \times \cdots \times \mathbb{Z}_8$ ($n$ times). Clearly, $M = \{\overline{0},\overline{4}\}$ is an ideal of $\mathbb{Z}_8$. Define the $D$-module $M$ such that $xM = a_1M$ for every $x = (a_1,\ldots , a_n) \in D$ and consider the idealization ring $R = D (+) M$ and the ideal $I = \{(0,\ldots ,0)\}(+)M$ of $R$. It follows from \cite[Theorem 25.1 (3)]{Huckaba} that every prime ideal of $R$ is of the form $P(+)M$ for some prime ideal $P$ of $D$ and since for every $a,b,c \in R\setminus I$ and $abc \in I$, we deduce that $abc = ((0,\ldots ,0),0)$. Hence $I$ is a nonzero weakly $1$-absorbing prime ideal of $R$, and thus there are exactly $n$ prime ideals of $R$ that are minimal over $I$.
}
\end{proof}

In the next result, one can see that in a reduced ring, the radical of a nonzero weakly $1$-absorbing prime ideal a prime ideal.

\begin{thm} \label{reduced}
 Let $R$ be a reduced ring and $I$ a nonzero weakly $1$-absorbing prime ideal of $R$. Then $\sqrt{I}$ is a prime ideal of $R$.
\end{thm}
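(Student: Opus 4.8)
The plan is to establish that $\sqrt{I}$ is prime directly. It is proper, since $1\in\sqrt I$ would give $I=R$, so it suffices to show that whenever $ab\in\sqrt I$ we have $a\in\sqrt I$ or $b\in\sqrt I$. Suppose toward a contradiction that $a,b\notin\sqrt I$; then $a,b$ are nonzero nonunits with $a^k,b^k\notin I$ for all $k$, and $(ab)^n\in I$ for some $n$. I would then split on whether $ab$ is nilpotent. Because $R$ is reduced, either $ab$ is not nilpotent, so all its powers are nonzero and one may take $n\ge 2$ with $0\neq a^nb^n\in I$; or $ab$ is nilpotent, which forces $ab=0$.

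In the first case ($a^nb^n\neq 0$) I would run a descent using only the weakly $1$-absorbing property. Applying it to the triple $(a^{n-1},b^n,a)$, whose product $a^nb^n$ is a nonzero element of $I$ and with $a\notin I$, yields $a^{n-1}b^n\in I$; here every intermediate product $a^kb^n$ is nonzero, since $a^nb^n=a^{n-k}(a^kb^n)$. Iterating lowers the exponent of $a$ until $ab^n\in I$, and the triple $(b^{n-1},b,a)$ then gives $b^n\in I$ or $a\in I$, both contradicting $a,b\notin\sqrt I$. Note this part needs neither reducedness nor locality.

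The second case ($ab=0$) is where $I\neq 0$ and reducedness do the work. The key device is a perturbation lemma that sidesteps the failure of locality: if $uv=0$ and $u\notin I$, then $v+w$ is a nonunit for every $w\in I$, because a unit $v+w$ would give $u=(uw)(v+w)^{-1}\in I$ from $uw=u(v+w)\in I$. Applied with $(u,v)=(b,a)$ and $(a,b)$, this shows $a+w$ and $b+w$ are nonunits for all $w\in I$. I would then subdivide. If $aI\neq 0$ then $a^2I\neq 0$ as well (in a reduced ring $a^2I=0$ forces $(aw)^2=0$, hence $aw=0$), so picking $w\in I$ with $a^2w\neq 0$, the triple $(a,a,b+w)$ has product $a^2(b+w)=a^2w\neq 0$ in $I$, giving $a^2\in I$ or $b+w\in I$ and a contradiction; the case $bI\neq 0$ is symmetric. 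The fully degenerate case $aI=bI=0$ is finished by one clever triple: for any $0\neq w\in I$, the relations $ab=aw=bw=0$ give $(a+w)^2(b+w)=w^3$, a nonzero element of $I$, while $(a+w)^2\equiv a^2\pmod I$ lies outside $I$ and $b+w\notin I$, violating the weakly $1$-absorbing condition.

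I expect the second case, and in particular the degenerate sub-case $aI=bI=0$, to be the main obstacle. There $a$ and $b$ annihilate $I$, so no product of $a$ or $b$ against elements of $I$ can witness membership, and one cannot simply perturb into $I$; the contradiction has to be produced from a bare nonzero power $w^3$ of an element of $I$, made usable only through the nonunit-perturbation lemma, which is precisely the step that replaces the local hypothesis used in Lemma~\ref{3.4} and Theorem~\ref{3.5}.
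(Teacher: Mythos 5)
Your proof is correct, but it takes a genuinely different route from the paper's. The paper argues in two steps: first it shows $\sqrt{I}$ is \emph{weakly} prime by a single application of the defining property --- if $0 \neq ab \in \sqrt{I}$ with $a,b$ nonunit, then $(ab)^n \neq 0$ by reducedness (one may take $n \geq 2$), and the one triple $(a^m,\, a^{n-m},\, b^n)$ immediately gives $a^n \in I$ or $b^n \in I$, with no descent needed; the troublesome case $ab = 0$ costs nothing at this stage because weakly prime ideals only constrain nonzero products. Second, the paper invokes Anderson--Smith \cite[Corollary 2]{Ahmet}: a weakly prime ideal that is not prime has square zero, which for $\sqrt{I} \supseteq I \neq 0$ in a reduced ring is impossible, so $\sqrt{I}$ is prime. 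Your argument is instead fully self-contained: your case $ab \neq 0$ accomplishes by iterated descent what the paper does in one application (both are valid; yours is just longer), and your case $ab = 0$ --- the perturbation lemma showing $a+w$ and $b+w$ are nonunits, the split on whether $aI$ or $bI$ vanishes, and the computation $(a+w)^2(b+w) = w^3 \neq 0$ --- is in effect a re-proof, in this special setting, of the Anderson--Smith fact that the paper cites; this is exactly where $I \neq 0$ and reducedness enter in both treatments. What your approach buys is independence from the external citation and an explicit demonstration of how nonunit perturbations substitute for the locality hypothesis used in Lemma \ref{3.4} and Theorem \ref{3.5}; what the paper's approach buys is brevity, by reducing the question to weak primality of $\sqrt{I}$ and outsourcing the zero-product difficulty to a known result.
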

\begin{proof}
{ Let $0 \neq ab \in \sqrt{I}$ for some $a,b \in R$. Without loss of generality, we may assume that $a, b$ are nonunit. Then $(ab)^n \in I$ for some positive integer $n$, and so $(ab)^n \neq 0$, because $R$ is reduced. Therefore $0 \neq a^ma^{(n-m)}b^n\in I$ for some positive integer $m$, and hence either $a^ma^{(n-m)} = a^n \in I$ or $b^n \in I$. Thus $\sqrt{I}$ is a weakly prime ideal of $R.$ But $R$ is reduced and $I \neq \{0\}$, hence $\sqrt{I}$ is a prime ideal of $R$, by \cite[Corollary 2]{Ahmet}.
}
\end{proof}

 It was shown in \cite[Theorem 2.4]{yassine} that if $R$ is a non-local ring, then every $1$-absorbing prime ideal is prime. In the following theorem, we show that if $R$ is a non-local ring and $I$ is a proper ideal of $R$ having the property that $ann(x)$ is not a maximal ideal of $R$ for every element $x \in I$, then $I$ is a weakly $1$-absorbing prime ideal if and only if $I$ is a weakly prime ideal.

\begin{thm} \label{reduced2}
Let $R$ be a non-local ring and $I$ a proper ideal of $R$ having the property that $ann(x)$ is not a maximal ideal of $R$ for every element $x \in I$. Then $I$ is a weakly $1$-absorbing prime ideal if and only if $I$ is a weakly prime ideal.
\end{thm}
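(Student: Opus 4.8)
The backward implication needs no work: as already noted in the text, every weakly prime ideal is weakly $1$-absorbing prime, and this uses neither the non-locality of $R$ nor the annihilator hypothesis. So the entire content is the forward implication, and the plan is to show that a weakly $1$-absorbing prime ideal $I$ satisfying the two stated hypotheses is automatically weakly prime.

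To do this I would take $a,b\in R$ with $0\neq ab\in I$ and aim at $a\in I$ or $b\in I$. First I would clear away the unit case: if $a$ is a unit then $b=a^{-1}(ab)\in I$, and symmetrically for $b$, so I may assume $a,b$ are nonunit. Now suppose $a\notin I$; the goal becomes $b\in I$. Put $J=ann(ab)$. Since $ab\in I$ with $ab\neq 0$, the standing hypothesis forces $J$ to be non-maximal, and $J$ is proper because $1\cdot ab=ab\neq 0$. The key move is to feed well-chosen triples into Definition~\ref{dfn}: for every nonunit $c\notin J$ we have $c(ab)\neq 0$ (as $c\notin ann(ab)$) and $c(ab)=cba\in I$, with $c,b,a$ all nonunit. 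Applying the weakly $1$-absorbing condition to the triple $(c,b,a)$ gives $cb\in I$ or $a\in I$; since $a\notin I$, this yields $cb\in I$ for \emph{every} nonunit $c\notin J$.

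The crux, and the step I expect to be the main obstacle, is to upgrade this pointwise information to $b\in I$, because the ``$\neq 0$'' clause only lets me use elements $c\notin J$. For this I would isolate the following lemma, which is exactly where non-locality enters: \emph{if $R$ is non-local and $J$ is a proper non-maximal ideal, then the set of nonunits lying outside $J$ generates the unit ideal.} Granting it, let $\mathfrak a$ be the ideal these nonunits generate, so $\mathfrak a=R$; since each generator $c$ satisfies $cb\in I$, we get $\mathfrak a b\subseteq I$, and hence $b=1\cdot b\in Rb=\mathfrak a b\subseteq I$, which is what we want.

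It remains to prove the lemma. I would argue by contradiction: suppose all nonunits outside $J$ lie in a single maximal ideal $\mathfrak m$, i.e. every nonunit not in $\mathfrak m$ lies in $J$. Using non-locality, pick a maximal ideal $\mathfrak n\neq\mathfrak m$ and an element $x\in\mathfrak n\setminus\mathfrak m$ (which exists since $\mathfrak n\not\subseteq\mathfrak m$); then $x$ is a nonunit not in $\mathfrak m$, so $x\in J$. Next I would show $\mathfrak n\subseteq J$: for $y\in\mathfrak n\cap\mathfrak m$ the element $x+y$ lies in $\mathfrak n\setminus\mathfrak m$, hence in $J$, so $y=(x+y)-x\in J$; together with $\mathfrak n\setminus\mathfrak m\subseteq J$ this gives $\mathfrak n\subseteq J$. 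But then $J\supseteq\mathfrak n$ forces $J=\mathfrak n$ (as $J$ is proper and $\mathfrak n$ is maximal), contradicting that $J$ is not maximal. Hence no such $\mathfrak m$ exists, the lemma holds, and the argument is complete.
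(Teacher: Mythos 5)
Your proposal is correct, but it is organized quite differently from the paper's proof, so a comparison is worthwhile. The paper argues by explicit element-chasing: it fixes a maximal ideal $L$ strictly containing $ann(ab)$, uses non-locality to pick $m\in M\setminus L$ for a second maximal ideal $M$, deduces $ma\in I$ from the triple $(m,a,b)$ (assuming $b\notin I$), then invokes the Jacobson-radical characterization of units ($m\notin J(R)$, so $1+rm$ is a nonunit for some $r$) and splits into two cases according to whether $1+rm$ annihilates $ab$; in the second case the non-maximality of $ann(ab)$ supplies a corrector $w\in L\setminus ann(ab)$, and in either case $a$ is recovered as an explicit $R$-linear combination of elements already known to lie in $I$. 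You instead prove the uniform statement that $cb\in I$ for \emph{every} nonunit $c\notin ann(ab)$, and isolate as a standalone lemma the fact that in a non-local ring the nonunits outside a proper non-maximal ideal generate the unit ideal; writing $1=\sum r_ic_i$ then gives $b\in Rb\subseteq I$ at once. Your lemma is precisely the abstract content of the paper's case analysis: the paper's two cases amount to exhibiting concrete such combinations, namely $1=(1+rm)-rm$ or $1=(1+rm+w)-rm-w$, whose terms are nonunits outside $ann(ab)$. Both arguments consume the two hypotheses in the same places (non-locality to obtain a second maximal ideal, non-maximality of the annihilator to escape it), so the proofs are equivalent in substance; yours buys a cleaner, reusable statement and avoids the case split, while the paper's stays entirely at the level of elements and avoids discussing ideals generated by infinite sets. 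One small point of care in your write-up, which you did handle correctly: the lemma's contradiction argument works even if no nonunits outside $J$ exist (the generated ideal is then $\{0\}$, still proper), so there is no hidden non-emptiness gap.
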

\begin{proof}
{  It is easy to see that every weakly prime ideal of $R$ is a weakly $1$-absorbing prime ideal of $R$. Then assume that $I$ is a weakly $1$-absorbing prime ideal of $R$ and let $0 \neq ab \in I$ for some $a,b \in R$. Without loss of generality, we may assume that $a, b$ are nonunit. Since $ab \neq 0$, $ann(ab)$ is a proper ideal of $R$, and so $ann(ab) \subset L$, for some  maximal ideal $L$ of $R$. But $R$ is a non-local ring, hence there exists a maximal ideal $M$ of $R$ such that $M \neq L$. Suppose that $m \in M \setminus L$. Then $m \notin ann(ab)$, which means that $0 \neq mab \in I$. Since $I$ is a weakly $1$-absorbing prime ideal of $R$, either $ma \in I$ or $b\in I$. If $b\in I$, then the proof is complete. So suppose that $b \notin I$. Therefore $ma\in I$. But $m \notin L$ and $L$ is a maximal ideal of $R$, hence $m \notin$ J$(R)$. This means that there exists an $r \in R$ such that $1 + rm$ is a nonunit element of $R$. We take the following two cases: \textbf{Case one:} if $1+rm \notin ann(ab)$, then $0 \neq (1+rm)ab \notin I$. Since $I$ is a weakly $1$-absorbing prime ideal of $R$ and $b \notin I$, we conclude that $(1 + rm)a = a + rma \in I$. But $rma \in I$, so $a \in I$ and $I$ is a weakly prime ideal of $R$. \textbf{Case two:} Suppose that $1+rm \in ann(ab)$. Since $ann(ab)$ is not a maximal ideal of $R$ and $ann(ab) \subset L$, there exists an element $w \in L \setminus ann(ab)$. Therefore $0 \neq wab  \in I$. But $I$ is a weakly $1$-absorbing prime ideal of $R$ and $b \notin I$, thus $wa \in I$. We have $1+rm+w$ is a nonzero nonunit element of $L$, because $1 + rm \in ann(ab) \subset L$ and $w \in L \setminus ann(ab)$. Thus $0 \neq (1+rm+w)ab \in I$. Now, since $I$ is a weakly $1$-absorbing prime ideal of $R$ and $b \notin I$, we have $(1+rm+w)a = a+rma+wa \in I$. Hence $a \in I$, and thus $I$ is a weakly prime ideal of $R$.
}
\end{proof}

Let $R$ be an integral domain with the quotient field $K$. Recall that a proper ideal $I$
of $R$ is called invertible if $II^{-1} = R$, where $I^{-1} = \{r \in K : rI \subseteq R\}$. An integral domain is called a Dedekind domain if every nonzero proper ideal of $R$ is invertible. In the following results, weakly $1$-absorbing prime ideals of Dedekind domains and principal ideal domains are completely described.

\begin{thm} \label{resulta}
Let $R$ be a Noetherian integral domain that is not a field and $I$ an ideal of $R$. Then $(1) \Rightarrow (2) \Rightarrow (3)$.

$(1)$ $R$ is a Dedekind domain;

$(2)$ If $I$ is a weakly $1$-absorbing prime ideal of $R$, then $I = M$ or $I = M^2$ where $M$ is a maximal ideal of $R$;

$(3)$ If $I$ is a weakly $1$-absorbing prime ideal of $R$, then $I = P$ or $I = P^2$ where $P = \sqrt{I}$ is a prime ideal of $R$.
\end{thm}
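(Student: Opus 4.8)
The plan is to treat the two implications in turn, with all the substance lying in $(1)\Rightarrow(2)$; the passage $(2)\Rightarrow(3)$ is then immediate.

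Assume $(1)$, so $R$ is Dedekind, and let $I$ be a nonzero weakly $1$-absorbing prime ideal (the factorization below forces $I\ne 0$, and the zero ideal is tacitly excluded from the statement). First I would observe that in a domain $I$ can have no triple-zero: if $(a,b,c)$ were one, then $abc=0$ would force one of $a,b,c$ to vanish, whence $ab=0\in I$ or $c=0\in I$, contradicting $ab\notin I$ and $c\notin I$. Hence $I$ is in fact $1$-absorbing prime, and by the remark preceding Theorem~\ref{min} it has a unique minimal prime. Since $R$ is Noetherian and Dedekind (hence of Krull dimension one, as $R$ is not a field), $I$ factors as $I=M_1^{e_1}\cdots M_k^{e_k}$ with the $M_i$ distinct maximal ideals, and these $M_i$ are precisely the minimal primes over $I$. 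Uniqueness forces $k=1$, so $I=M^{n}$ for a single maximal ideal $M$ and some $n\ge 1$.

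The crux is to show $n\le 2$, which I expect to be the main obstacle. I would argue by exhibiting a forbidden triple when $n\ge 3$. Because $M$ is a nonzero invertible ideal of a Dedekind domain, the powers strictly decrease, $M\supsetneq M^{2}\supsetneq\cdots$, so I may choose $t\in M\setminus M^{2}$ and, since $n\ge 3$, an element $c\in M^{n-2}\setminus M^{n-1}$. All three of $t,t,c$ lie in $M$ and so are nonunits, and $t^{2}c\in M^{2}\,M^{n-2}=M^{n}=I$ with $t^{2}c\ne 0$ since $R$ is a domain. Writing $v$ for the $M$-adic valuation coming from the DVR $R_M$, one has $x\in M^{j}$ iff $v(x)\ge j$; thus from $v(t^{2})=2<n$ and $v(c)=n-2<n$ I get $t^{2}\notin I$ and $c\notin I$. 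Hence $(t,t,c)$ satisfies $0\ne t^{2}c\in I$ while $t^{2}\notin I$ and $c\notin I$, contradicting that $I$ is weakly $1$-absorbing prime. Therefore $n\le 2$, i.e. $I=M$ or $I=M^{2}$, proving $(2)$.

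Finally, $(2)\Rightarrow(3)$ is a direct check. Assuming $(2)$, any weakly $1$-absorbing prime ideal $I$ equals $M$ or $M^{2}$ for some maximal ideal $M$; since $M$ is prime we have $\sqrt{M}=M$ and $\sqrt{M^{2}}=M$, so $P:=\sqrt{I}=M$ is prime and $I=P$ or $I=P^{2}$ accordingly, which is exactly $(3)$. The only genuinely delicate point in the whole argument is the valuation step ruling out $n\ge 3$: one must guarantee the existence of elements of exact value $1$ and $n-2$ (supplied by the strict inclusions $M^{k}\supsetneq M^{k+1}$) and use additivity of $v$ to place $t^{2}c$ inside $M^{n}$ while keeping $t^{2}$ and $c$ outside it.
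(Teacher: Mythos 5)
Your proof is correct, but it takes a genuinely different route from the paper's. The paper's argument for $(1)\Rightarrow(2)$ is short and largely outsourced: it applies Theorem \ref{reduced} to conclude that $P=\sqrt{I}$ is prime, hence maximal since a Dedekind domain has Krull dimension one, asserts that $I$ is then $P$-primary with $P^2\subseteq I$, and finishes by citing Theorem 2.10 of \cite{yassine}; the real content (why the exponent cannot exceed $2$) lives inside that citation. You instead give a self-contained argument in three steps: (i) in an integral domain a weakly $1$-absorbing prime ideal admits no triple-zero, hence is automatically $1$-absorbing prime --- a clean reduction the paper never states explicitly and which is worth recording in its own right; (ii) the unique-minimal-prime property of $1$-absorbing prime ideals combined with the Dedekind factorization forces $I=M^n$ for a single maximal ideal $M$; (iii) the $M$-adic valuation argument with the triple $(t,t,c)$, $t\in M\setminus M^2$, $c\in M^{n-2}\setminus M^{n-1}$, exhibits a forbidden triple whenever $n\ge 3$. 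Step (iii) is essentially the content hidden in the paper's external reference, and your version has the advantage of showing concretely where exponent $3$ breaks down; all the supporting facts you use (strict descent of powers of an invertible ideal, $x\in M^j$ iff $v_M(x)\ge j$) are standard for Dedekind domains. Note that both proofs tacitly assume $I\neq 0$, and necessarily so: the zero ideal of a Dedekind domain that is not a field is weakly $1$-absorbing prime yet equals neither $M$ nor $M^2$, so $(2)$ is literally false without this convention; the paper buries this in its appeal to Theorem \ref{reduced} (which requires $I$ nonzero), whereas you flag it openly, which is the more honest treatment. Your handling of $(2)\Rightarrow(3)$ matches the paper's ``this is obvious.''
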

\begin{proof}
{$(1) \Rightarrow (2)$  Suppose that $R$ is a Noetherian integral domain that is not a field and $I$ is a weakly $1$-absorbing prime ideal of $R$ such that $P = \sqrt{I}$. Since $R$ is a Dedekind domain, we conclude that every nonzero prime ideal of $R$ is a maximal ideal of $R$. Therefore $P$ is a maximal ideal of $R$, by Theorem \ref{reduced}. This means that $I$ is a primary ideal of $R$ such that $P^2 \subseteq I$. Hence, by \cite[Theorem 2.10]{yassine}, $I = M$ or $I = M^2$ where $M$ is a maximal ideal of $R$.

$(2) \Rightarrow (3)$ This is obvious.}
\end{proof}

In view of Theorem \ref{resulta}, we have the following result.

\begin{cor}
Let $R$ be a principal ideal domain that is not a field and $I$ be a nonzero proper ideal of $R$. If $I$ is a weakly $1$-absorbing prime ideal of $R$, then $I = pR$ or $I = p^2R$ for some nonzero prime element $p$ of $R$.
\end{cor}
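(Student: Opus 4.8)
The plan is to deduce this directly from Theorem \ref{resulta}, since a principal ideal domain is the prototypical example of a Dedekind domain. First I would observe that any PID $R$ is a Noetherian integral domain, and that every nonzero proper ideal of a PID, being principal, is invertible in the fraction field; hence $R$ is a Dedekind domain and condition $(1)$ of Theorem \ref{resulta} is satisfied. Applying the implication $(1) \Rightarrow (2)$ of that theorem to the weakly $1$-absorbing prime ideal $I$, I obtain that $I = M$ or $I = M^2$ for some maximal ideal $M$ of $R$.

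Next I would translate this conclusion into the language of prime elements. In a PID every nonzero prime ideal is maximal and is generated by a single prime element. Since $R$ is not a field and $I \neq 0$, the maximal ideal $M$ produced above is nonzero, so I may write $M = pR$ for some nonzero prime element $p$ of $R$. Consequently $M^2 = (pR)^2 = p^2R$, and the two alternatives $I = M$ and $I = M^2$ become precisely $I = pR$ and $I = p^2R$, which is exactly the asserted dichotomy.

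There is essentially no real obstacle here beyond correctly invoking the standard structural facts that a PID is a Dedekind domain and that its maximal ideals are generated by prime elements; the entire substance of the argument is carried by Theorem \ref{resulta}. The only point requiring a moment of care is verifying that $M \neq 0$, so that $p$ is a genuine nonzero prime element rather than a unit; this is guaranteed by the hypotheses that $R$ is not a field and that $I$ is a nonzero ideal.
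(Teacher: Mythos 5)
Your proposal is correct and follows exactly the route the paper intends: the paper derives this corollary directly from Theorem \ref{resulta} (``In view of Theorem \ref{resulta}, we have the following result''), with the standard facts that a PID is a Dedekind domain and that its nonzero maximal ideals are of the form $pR$ for a prime element $p$ left implicit. Your write-up simply makes those implicit steps explicit, including the minor but worthwhile check that $M \neq 0$.
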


In the following theorems we show that weakly $1$-absorbing prime
ideals are really of interest in indecomposable rings.

\begin{thm} \label{311}
  Suppose that $R = R_1 \times R_2$ is a decomposable ring, where $R_1$ and $R_2$ are rings  and $I$ is a proper ideal of $R_1$. Then the following statements are equivalent.

$(1)$ $I \times R_2$ is a weakly $1$-absorbing prime ideal of $R$.

$(2)$ $I \times R_2$ is a $1$-absorbing prime ideal of $R$.

$(3)$ $I$ is a $1$-absorbing prime ideal of $R_1$.
\end{thm}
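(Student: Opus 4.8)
The plan is to establish the cycle $(2)\Rightarrow(1)\Rightarrow(3)\Rightarrow(2)$, which delivers all three equivalences. Throughout I write a general element of $R=R_1\times R_2$ as a pair $(r_1,r_2)$ and use the standard fact that $(u_1,u_2)$ is a unit of $R$ exactly when $u_1$ is a unit of $R_1$ and $u_2$ is a unit of $R_2$; note in particular that $(1,0)$ is a \emph{nonunit} of $R$ even though its first coordinate is a unit, a feature that will matter in the last step. Since $I\neq R_1$, the ideal $I\times R_2$ is proper in $R$, and for any pair one has $(r_1,r_2)\in I\times R_2$ iff $r_1\in I$. The implication $(2)\Rightarrow(1)$ is then immediate from the definitions: a $1$-absorbing prime ideal satisfies the required conclusion for \emph{every} nonunit triple $a,b,c$ with $abc\in I\times R_2$, hence in particular for those with $0\neq abc\in I\times R_2$, so it is weakly $1$-absorbing prime.

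For $(1)\Rightarrow(3)$ I would transport triples from $R_1$ into $R$ through the map $x\mapsto(x,1)$, which has the advantage of forcing all products to be nonzero. Given nonunits $x,y,z\in R_1$ with $xyz\in I$, put $a=(x,1)$, $b=(y,1)$, $c=(z,1)$. Because $x,y,z$ are nonunits of $R_1$, each of $a,b,c$ is a nonunit of $R$, and $abc=(xyz,1)$ lies in $I\times R_2$ while being nonzero, since its second coordinate is $1\neq 0$ (here $R_2\neq 0$ as $R$ is decomposable). Weak $1$-absorbing primeness of $I\times R_2$ now yields $ab=(xy,1)\in I\times R_2$ or $c=(z,1)\in I\times R_2$, that is, $xy\in I$ or $z\in I$; hence $I$ is $1$-absorbing prime in $R_1$.

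The substantive step is $(3)\Rightarrow(2)$. Here one takes nonunits $a=(a_1,a_2)$, $b=(b_1,b_2)$, $c=(c_1,c_2)$ of $R$ with $abc\in I\times R_2$, i.e. $a_1b_1c_1\in I$, and must deduce $a_1b_1\in I$ or $c_1\in I$. My approach would be a case analysis according to which of the first coordinates $a_1,b_1,c_1$ are units of $R_1$. If $a_1,b_1,c_1$ are all nonunits, then the hypothesis that $I$ is $1$-absorbing prime in $R_1$ applies verbatim and closes the case; and if $c_1$ is a unit one simply multiplies $a_1b_1c_1\in I$ by $c_1^{-1}$ to obtain $a_1b_1\in I$. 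The main obstacle is the mixed case in which $c_1$ is a nonunit while, say, $a_1$ is a unit (possible precisely because a nonunit of $R$ such as $(1,0)$ can have a unit first coordinate): there one only extracts $b_1c_1=a_1^{-1}(a_1b_1c_1)\in I$, and to conclude $a_1b_1\in I$ — equivalently $b_1\in I$, as $a_1$ is a unit — or $c_1\in I$, one needs that a product of two nonunits lying in $I$ forces one factor into $I$. That is a primeness-type input not supplied by $1$-absorbing primeness alone, so this is exactly where the argument must do its real work; the natural way to try to supply it is to exploit that $R=R_1\times R_2$ is non-local and invoke \cite[Theorem 2.4]{yassine} to upgrade the $1$-absorbing behaviour to genuine primeness on the first factor.
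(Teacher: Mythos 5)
Your implications $(2)\Rightarrow(1)$ and $(1)\Rightarrow(3)$ are correct, and your $(1)\Rightarrow(3)$ is essentially the paper's own argument (the paper multiplies by $(c,x)$ with $0\neq x\in R_2$ where you use $(z,1)$; both devices make the second coordinate of the product nonzero so that weak $1$-absorbing primeness can be applied). The genuine gap is in $(3)\Rightarrow(2)$: you never complete it, and the repair you sketch cannot work. \cite[Theorem 2.4]{yassine} upgrades $1$-absorbing prime ideals \emph{of a non-local ring} to prime ideals of that same ring; here the hypothesis $(3)$ concerns the ideal $I$ of $R_1$, and $R_1$ may perfectly well be local, so the non-locality of $R=R_1\times R_2$ gives no leverage on $I$. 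Non-locality of $R$ works only in the opposite direction: it shows that $(2)$ forces $I\times R_2$, and hence $I$, to be prime.

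In fact the mixed case you isolated is a genuine obstruction, not a technical difficulty: the implication $(3)\Rightarrow(2)$, and even $(3)\Rightarrow(1)$, is false. Take $R_1=\mathbb{Z}_8$, $I=4\mathbb{Z}_8=\{0,4\}$ and $R_2=\mathbb{Z}_2$. Every product of two nonunits of $\mathbb{Z}_8$ is divisible by $4$ and hence lies in $I$, so $I$ is a $1$-absorbing prime ideal of $R_1$ and $(3)$ holds. But in $R=\mathbb{Z}_8\times\mathbb{Z}_2$ the nonunits $a=(1,0)$, $b=c=(2,1)$ satisfy $(0,0)\neq abc=(4,0)\in I\times R_2$, while $ab=(2,0)\notin I\times R_2$ and $c=(2,1)\notin I\times R_2$; thus $I\times R_2$ is not even weakly $1$-absorbing prime. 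So only $(2)\Rightarrow(1)\Rightarrow(3)$ are valid, and by \cite[Theorem 2.4]{yassine} statement $(2)$ is actually equivalent to $I$ being a prime ideal of $R_1$, which need not follow from $(3)$ when $R_1$ is local. For comparison, the paper's own proof simply declares $(3)\Rightarrow(2)$ and $(2)\Rightarrow(1)$ ``clear'' and only proves $(1)\Rightarrow(3)$; your analysis of nonunits of $R$ with a unit first coordinate, such as $(1,0)$, is exactly the point the paper overlooked, and it exposes an error in the theorem itself rather than a fixable defect in your argument.
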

\begin{proof}
{ $(3) \Rightarrow (2)$ and $(2) \Rightarrow (1)$ These are clear.

$(1) \Rightarrow (3)$ Suppose that $I \times R_2$ is a weakly $1$-absorbing prime ideal of $R$ and $abc \in I$ for some nonunit elements $a,b,c \in R_1$. Suppose also that $0\neq x\in R_2$. Then $(0,0)\neq (a,1)(b,1)(c,x) = (abc,x) \in I \times R_2$, and so either $(a,1)(b,1) = (ab,1) \in I \times R_2$ or $(c,x) \in I\times R_2$. Hence, either $ab \in I$ or $c \in I$. Thus $I$ is a $1$-absorbing prime ideal of $R_1$.
}
\end{proof}

\begin{thm}  \label{312}
  Suppose that $R = R_1 \times R_2$ is a decomposable ring, where $R_1$ and $R_2$ are rings  and $I_1, I_2$ are nonzero ideals of $R_1$ and $R_2$, respectively. Then the following statements are equivalent.

$(1)$ $I_1 \times I_2$ is a weakly $1$-absorbing prime ideal of $R$.

$(2)$  $I_1 = R_1$ and $I_2$ is a $1$-absorbing prime ideal of $R_1$ or $I_2 = R_2$ and $I_1$ is a $1$-absorbing prime ideal of $R_1$ or $I_1, I_2$ are prime ideals of $R_1, R_2$, respectively.

$(3)$ $I_1 \times I_2$ is a $1$-absorbing prime ideal of $R$.

$(4)$ $I_1 \times I_2$ is a prime ideal of $R$.
\end{thm}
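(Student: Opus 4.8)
The plan is to prove the cycle $(4)\Rightarrow(3)\Rightarrow(1)\Rightarrow(2)\Rightarrow(4)$, using the non-locality of $R=R_1\times R_2$ as the organizing principle. The implications $(4)\Rightarrow(3)$ and $(3)\Rightarrow(1)$ are immediate: a prime ideal is $1$-absorbing prime because $abc=(ab)c\in I$ already forces $ab\in I$ or $c\in I$, and a $1$-absorbing prime ideal is trivially weakly $1$-absorbing prime. For the equivalence $(3)\Leftrightarrow(4)$ I would observe that $R$ has at least two maximal ideals, hence is non-local, and apply \cite[Theorem 2.4]{yassine} to conclude that every $1$-absorbing prime ideal of $R$ is in fact prime. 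This collapses $(3)$ and $(4)$ and reduces the genuine content to the interplay between the weak notion $(1)$ and the structural description $(2)$.

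The heart of the matter is $(1)\Rightarrow(2)$. First I would show that $I_1\times I_2$ cannot have both $I_1\subsetneq R_1$ and $I_2\subsetneq R_2$: choosing $0\neq s_1\in I_1$ and $0\neq s_2\in I_2$, the three nonunits $a=(1,s_2)$, $b=(1,s_2)$, $c=(s_1,1)$ satisfy $0\neq abc=(s_1,s_2^2)\in I_1\times I_2$, while $ab=(1,s_2^2)\notin I_1\times I_2$ and $c=(s_1,1)\notin I_1\times I_2$, contradicting $(1)$. Here the hypothesis that both ideals are nonzero is exactly what keeps $abc\neq 0$, defeating the ``weakly'' escape. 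Hence one factor is the whole ring; say $I_1=R_1$, so that $I_1\times I_2=R_1\times I_2$. Primeness of $I_2$ then falls out directly: given $xy\in I_2$, the nonunits $(0,x),(0,1),(0,y)$ multiply to $(0,xy)\in R_1\times I_2$, and whenever $xy\neq 0$ the weak hypothesis gives $(0,x)\in R_1\times I_2$ or $(0,y)\in R_1\times I_2$, that is, $x\in I_2$ or $y\in I_2$. This lands in the first disjunct of $(2)$, and the case $I_2=R_2$ is symmetric.

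To close with $(2)\Rightarrow(4)$ I would treat the disjuncts separately. When $I_1=R_1$ with $I_2$ prime (resp.\ $I_2=R_2$ with $I_1$ prime), the quotient $R/(R_1\times I_2)\cong R_2/I_2$ is a domain, so $R_1\times I_2$ is prime; alternatively one may invoke Theorem \ref{311} to transfer the property between $R_1\times I_2$ and $I_2\subseteq R_2$. The remaining disjunct I would cross-check against the standard classification of primes of a product, namely that every prime ideal of $R_1\times R_2$ has the form $P\times R_2$ or $R_1\times Q$; since both factors are assumed nonzero, this classification forces one of them to be the whole ring and so returns us to the two cases just handled.

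I expect the main obstacle to lie in the degenerate subcases of $(1)\Rightarrow(2)$ where the natural triple product vanishes, precisely when $xy=0$ in the primeness step above, since there the weak hypothesis cannot be applied verbatim. To recover a nonzero product I would perturb the factors by a fixed $0\neq t\in I_2$, forming for instance $(0,x+t),(0,1),(0,y)$ or $(0,x),(0,1),(0,y+t)$, and only when every such perturbation annihilates would I fall back on the triple-zero machinery of Lemma \ref{3.4}. Controlling this nest of zero-product configurations, rather than the generic computation, is where the real work of the proof will be concentrated.
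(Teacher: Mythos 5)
Your outer cycle is sound where it is routine: $(4)\Rightarrow(3)\Rightarrow(1)$ is immediate, and collapsing $(3)$ and $(4)$ via non-locality of $R_1\times R_2$ and \cite[Theorem 2.4]{yassine} matches the paper. Your opening move in $(1)\Rightarrow(2)$ is also correct and in fact sharper than the paper's: your triple $(1,s_2),(1,s_2),(s_1,1)$, whose product $(s_1,s_2^2)$ is nonzero because $s_1\neq 0$, shows outright that $I_1$ and $I_2$ cannot both be proper, whereas the paper in that case only derives that $I_1,I_2$ are both prime and postpones the contradiction to its proof of $(2)\Rightarrow(3)$ (where, problematically, it re-invokes hypothesis $(1)$). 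The genuine gap is exactly the ``hard case'' you flagged at the end, and it cannot be closed, because the claim you are trying to prove there is false: with $I_1=R_1$, weak $1$-absorbing primeness of $R_1\times I_2$ does not force $I_2$ to be prime. Take $R_1=F$ a field, $R_2=k[u,v]/(u,v)^2$, $I_2=(u)$, and write $m=(u,v)$, so $m^2=0$ and any product of two nonunits of $R_2$ vanishes. Given nonunits $a,b,c$ of $F\times R_2$ with $R_2$-coordinates $r_1,r_2,r_3$ and $0\neq abc\in F\times(u)$: if $r_1,r_2$ are both nonunits, then $r_1r_2=0$ gives $ab\in F\times(u)$; otherwise one of $a,b$ has unit $R_2$-coordinate and hence zero $F$-coordinate, so the $F$-coordinate of $abc$ is $0$ and $r_1r_2r_3\neq 0$, which by $m^2=0$ forces at least two of the $r_i$ to be units, and then $r_1r_2r_3\in(u)$ yields either $r_1r_2\in(u)$ (so $ab\in F\times(u)$) or $r_3\in(u)$ (so $c\in F\times(u)$). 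Hence $F\times(u)$ is weakly $1$-absorbing prime, yet $v\cdot v=0\in(u)$ with $v\notin(u)$, which is precisely your configuration $xy=0$ in which every perturbation annihilates ($I_2^2=I_2m=0$). Your proposed fallback, Lemma \ref{3.4}, is unavailable: it is stated for local rings, and $R_1\times R_2$ is never local.

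The same example breaks your closing step $(2)\Rightarrow(4)$, where you silently strengthen the first disjunct of $(2)$ from ``$I_2$ is $1$-absorbing prime'' to ``$I_2$ is prime'': in the example, $(u)$ is $1$-absorbing prime in the local ring $R_2$ (any product of two nonunits lies in $m^2=0$), but $F\times(u)$ is not prime. Your handling of the third disjunct is also inverted: the classification of primes of a product does not ``force'' one of $I_1,I_2$ to be the whole ring; when both are proper primes, $I_1\times I_2$ is simply not prime, and indeed $2\mathbb{Z}\times 3\mathbb{Z}$ in $\mathbb{Z}\times\mathbb{Z}$ satisfies $(2)$ but neither $(4)$ nor $(1)$, since $(2,1)(2,1)(1,3)=(4,3)$ is a nonzero element of it while $(4,1)$ and $(1,3)$ are not. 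In fairness, these are defects of the theorem and not only of your strategy: the paper bridges the same gaps by citing Theorem \ref{311}, whose implication $(3)\Rightarrow(2)$ is dismissed there as clear but fails for the very example above; consequently the stated equivalence is itself false ($F\times(u)$ satisfies $(1)$ and $(2)$ but neither $(3)$ nor $(4)$). You located the real pressure point correctly; what the proposal misses is that the obstruction there is fatal rather than technical.
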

\begin{proof}
{ $(1) \Rightarrow (2)$ Let $I_1 \times I_2$ be a weakly $1$-absorbing prime ideal of $R$. If $I_1 = R_1$ ($I_2 = R_2$), then by Theorem \ref{311}, $I_2$ ($I_1$) is a $1$-absorbing prime ideal of $R_2$ ($R_1$). Therefore assume that $I_1$ and $I_2$ are proper ideals. Suppose that $a,b \in R_2$ such that $ab \in I_2$ and $0 \neq x \in I_1$. Then $(0,0) \neq (x,1)(1,a)(1,b) = (x,ab) \in I_1 \times I_2$. But $I_1$ is proper, hence $(1,a) \notin I_1\times I_2$ and $(1,b) \notin I_1\times I_2$. Without loss of generality, we may assume that $x, a, b$ are nonunit. Since $I_1 \times I_2$ is a weakly $1$-absorbing prime ideal of $R$, we conclude that $(x,1)(1,a) = (x,a) \in I_1 \times I_2$. Thus $I_2$ is a prime ideal of $R_2$. Similarly, it can be easily shown that $I_1$ is a prime ideal of $R_1$.

$(2) \Rightarrow (3)$ If $I_1 = R_1$ and $I_2$ is a $1$-absorbing prime ideal of $R_2$ or $I_2 = R_2$ and $I_1$ is a $1$-absorbing prime ideal of $R_1$, then the result follows from Theorem \ref{311}. Assume that $I_1, I_2$ are prime ideals of $R_1, R_2$, respectively. Since $I_1, I_2$ are nonzero ideals of $R_1$ and $R_2$, assume that $(0, 0)\neq (a, b) \in I_1 \times I_2$. Then $(0,0) \neq (a, 1)(a, 1)(1, b) = (a^2, b) \in I_1 \times I_2$. This means that either $(a, 1)(a, 1) \in I_1 \times I_2$ or $(1, b) \in I_1 \times I_2$, since $I_1 \times I_2$ is a weakly $1$-absorbing prime ideal of $R$. Hence either $(a, 1) \in I_1 \times I_2$ or $(1, b) \in I_1 \times I_2$, as $I_1$ is a prime ideals of $R_1$, a contradiction. Thus $I_1 \times I_2 = R_1 \times I_2$ or $I_1 \times I_2 = I_1 \times R_2$ and in both cases $I_1 \times I_2$ is a $1$-absorbing prime ideal of $R$.

$(3) \Rightarrow (4)$ Since $R$ is not a local ring, the proof follows from \cite[Theorem 2.4]{yassine}.

$(4) \Rightarrow (1)$ This is straightforward.}
\end{proof}

\begin{thm} \label{2131}
  Suppose that $R = R_1 \times R_2$ is a decomposable ring, where $R_1$ and $R_2$ are rings and $I_1$ is a nonzero proper ideal of $R_1$ and $I_2$ is a proper ideal of $R_2$. Then $(1) \Rightarrow (2)$. In fact, $(2) \Rightarrow (1)$ need not be true.

$(1)$ $I_1 \times I_2$ is a weakly $1$-absorbing prime ideal of $R$ that is not a $1$-absorbing prime ideal.

$(2)$ $I_1$ is a weakly prime ideal of $R_1$ that is not a prime ideal and $I_2 = \{0\}$ is
a prime ideal of $R_2$.
\end{thm}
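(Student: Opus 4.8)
The plan is to establish $(1)\Rightarrow(2)$ in four stages and then to see that the converse fails by producing an explicit counterexample. Throughout I write a typical element of $R$ as $(r_1,r_2)$ with $r_1\in R_1$, $r_2\in R_2$, and use that $(r_1,r_2)$ is a unit of $R$ exactly when $r_1$ is a unit of $R_1$ and $r_2$ a unit of $R_2$; in particular any element with second coordinate $0$ is a nonunit of $R$ once $R_2\neq\{0\}$. The first stage is to pin down $I_2$. Suppose $I_2\neq\{0\}$. Then $I_1$ and $I_2$ are both nonzero, so Theorem \ref{312} applies and says that $I_1\times I_2$ is weakly $1$-absorbing prime if and only if it is $1$-absorbing prime. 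This contradicts the assumption in $(1)$ that $I=I_1\times I_2$ is weakly $1$-absorbing prime but not $1$-absorbing prime. Hence $I_2=\{0\}$, and from here on $I=I_1\times\{0\}$ and $R_2\neq\{0\}$.

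Next I would show that $\{0\}$ is prime in $R_2$, i.e. that $R_2$ is a domain. Assume not, so there are nonzero $x,y\in R_2$ with $xy=0$; then $x,y$ are zero-divisors, hence nonunits of $R_2$. Choosing $0\neq u\in I_1$ (a nonunit of $R_1$), the elements $(u,1),(1,x),(1,y)$ are nonunits of $R$ and $(u,1)(1,x)(1,y)=(u,0)\neq 0$ lies in $I$, while $(u,1)(1,x)=(u,x)\notin I$ (its second coordinate is $x\neq0$) and $(1,y)\notin I$ (its first coordinate is $1\notin I_1$); this contradicts weak $1$-absorbing primeness, so $R_2$ is a domain. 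In the same spirit I would show $I_1$ is weakly prime: given $0\neq ab\in I_1$ with $a,b\in R_1$, the elements $(a,0),(1,0),(b,0)$ are nonunits of $R$ and $(a,0)(1,0)(b,0)=(ab,0)\neq0$ lies in $I$, so weak $1$-absorbing primeness forces $(a,0)\in I$ or $(b,0)\in I$, that is $a\in I_1$ or $b\in I_1$.

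The delicate point, and the step I expect to be the main obstacle, is to show that $I_1$ is not prime. Here I would use that $I$ is weakly $1$-absorbing prime but not $1$-absorbing prime, which guarantees a triple-zero $(\alpha,\beta,\gamma)$ of $I$ with $\alpha,\beta,\gamma$ nonunit, $\alpha\beta\gamma=0$, $\alpha\beta\notin I$ and $\gamma\notin I$. Writing $\alpha=(a_1,a_2)$, $\beta=(b_1,b_2)$, $\gamma=(c_1,c_2)$, the first coordinate gives $a_1b_1c_1=0\in I_1$. The goal is to group $a_1,b_1,c_1$ into two factors, both outside $I_1$, whose product is $0$; such a twin-zero of the weakly prime ideal $I_1$ is precisely an obstruction to $I_1$ being prime. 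The conditions $\alpha\beta\notin I$ and $\gamma\notin I$ translate to ($a_1b_1\notin I_1$ or $a_2b_2\neq0$) and ($c_1\notin I_1$ or $c_2\neq0$); combining these with $a_2b_2c_2=0$ and the fact that $R_2$ is a domain should, after a short case analysis on which second coordinates vanish, force $a_1b_1\notin I_1$ and $c_1\notin I_1$, yielding the twin-zero $(a_1b_1,c_1)$. Controlling this case analysis so that both factors genuinely avoid $I_1$ is the crux.

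Finally, for the claim that $(2)\Rightarrow(1)$ need not hold, I would exhibit an instance of $(2)$ that fails $(1)$. Take $R_1$ to carry a nonzero weakly prime ideal $I_1$ that is not prime, for instance $R_1=k[x,y]/(x^2,xy,y^2)$ with $I_1=(x)$: here $I_1^2=0$ and $I_1\neq\{0\}$, the relation $y^2=0\in I_1$ with $y\notin I_1$ shows $I_1$ is not prime, and a direct coefficient computation shows that any $0\neq ab\in I_1$ forces $a\in I_1$ or $b\in I_1$, so $I_1$ is weakly prime. Let $R_2=k$ and $I_2=\{0\}$, so that $(2)$ holds. Nevertheless $(1)$ fails, since one checks directly that $I_1\times\{0\}$ is not weakly $1$-absorbing prime: for $0\neq u\in I_1$ the nonunits $(1,0),(1,0),(u,1)$ satisfy $(1,0)(1,0)(u,1)=(u,0)\neq0$ in $I_1\times\{0\}$, while $(1,0)(1,0)=(1,0)\notin I_1\times\{0\}$ and $(u,1)\notin I_1\times\{0\}$.
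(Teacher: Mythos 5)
Your first three steps are correct: deducing $I_2=\{0\}$ from Theorem \ref{312}, the domain property of $R_2$, and the weak primality of $I_1$ all check out. In fact your triple $(a,0),(1,0),(b,0)$ for weak primality is an improvement on the paper, whose proof multiplies $(b,1)(1,0)(ab,1)=(ab^2,0)$ and therefore tacitly needs $ab^2\neq 0$, which does not follow from $ab\neq 0$. The genuine gap is exactly the step you flag as the crux: showing $I_1$ is not prime. The case analysis you sketch cannot be completed, because your listed constraints do not force $c_1\notin I_1$. Concretely, a triple-zero of $I=I_1\times\{0\}$ can have the form $\alpha=(1,0)$, $\beta=(1,0)$, $\gamma=(0,c_2)$ with $c_2\neq 0$: all three are nonunits, $\alpha\beta\gamma=(0,0)$, $\alpha\beta=(1,0)\notin I$, and $\gamma\notin I$, so every condition you wrote down is satisfied (here $a_2b_2=0$ and $a_1b_1=1\notin I_1$), and yet $c_1=0\in I_1$, so no regrouping of this triple-zero yields the desired twin-zero. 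Hence the ``short case analysis'' does not exist, and as written the implication $(1)\Rightarrow(2)$ is not proved.

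The irony is that your own last paragraph contains the missing argument. The triple $(1,0),(1,0),(u,1)$ with $0\neq u\in I_1$ uses nothing about $I_1$ except that it is nonzero and proper, and nothing about $R_2$ except $R_2\neq\{0\}$; hence it applies verbatim under hypothesis $(1)$ once $I_2=\{0\}$ is known: $(1,0)(1,0)(u,1)=(u,0)$ is a nonzero element of $I$, yet $(1,0)(1,0)=(1,0)\notin I$ and $(u,1)\notin I$, contradicting the assumed weak $1$-absorbing primeness of $I$. Thus hypothesis $(1)$ is never satisfiable, the implication $(1)\Rightarrow(2)$ holds vacuously, and in particular ``$I_1$ is not prime'' needs no triple-zero analysis at all. (The paper's own treatment of this step assumes $I_1$ prime and multiplies $(a,1)(a,1)(1,0)=(a^2,0)$ for $0\neq a\in I_1$, which likewise needs the unjustified hypothesis $a^2\neq 0$; your triple repairs that gap too.) Finally, your refutation of $(2)\Rightarrow(1)$ is complete and in one respect better than the paper's: by exhibiting $R_1=k[x,y]/(x^2,xy,y^2)$, $I_1=(x)$, $R_2=k$, you verify that $(2)$ is actually satisfiable, which is needed for ``$(2)\Rightarrow(1)$ need not be true'' to have content and which the paper omits.
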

\begin{proof}
{ $(1) \Rightarrow (2)$ Suppose that $I_1 \times I_2$ is a weakly $1$-absorbing prime ideal of $R$ that
is not a $1$-absorbing prime ideal and $I_2 \neq \{0\}$. Hence, by Theorem \ref{312}, $I_1 \times I_2$ is a $1$-absorbing
prime ideal of $R$, a contradiction, and thus $I_2 = \{0\}$. Now, we show that $I_2$ is a prime ideal of $R_2$. Suppose that $a,b \in R_2$ such that $ab \in I_2$ and $0 \neq x \in I_1$. Then $(0,0) \neq (x,1)(1,a)(1,b) = (x,ab) \in I_1 \times I_2$. But $I_1$ is proper, hence $(1,a) \notin I_1\times I_2$ and $(1,b) \notin I_1\times I_2$. Without loss of generality, we may assume that $x, a, b$ are nonunit. Since $I_1 \times I_2$ is a weakly $1$-absorbing prime ideal of $R$,  $(x,1)(1,a) = (x,a) \in I_1 \times I_2$. Thus $I_2 = \{0\}$ is a prime ideal of $R_2$. We show that $I_1$ is a weakly prime ideal of $R_1$. Let $a,b\in R_1$ such that $0\neq ab \in I_1$. Without loss of generality, we may assume that $a, b$ are nonunit. Since $(0,0) \neq (b,1)(1,0)(ab,1) = (ab^2,0) \in I_1 \times I_2$ and $(ab,1) \notin I_1\times \{0\}$ and $I_1 \times I_2$ is a weakly $1$-absorbing prime ideal of $R$, we conclude that $(b,0) \in I_1 \times \{0\}$, and hence $b\in I_1$. Thus $I_1$ is a weakly prime ideal of $R_1$. Assume that $I_1$ is a prime ideal of $R_1$. Since $I_1$ is a nonzero ideal of $R_1$, assume that $0\neq a \in I_1$. Then $(0,0) \neq (a, 1)(a, 1)(1, 0) = (a^2, 0) \in I_1 \times I_2$. This means that either $(a, 1)(a, 1) \in I_1 \times I_2$ or $(1, 0) \in I_1 \times I_2$, as $I_1 \times I_2$ is a weakly $1$-absorbing prime ideal of $R$. Hence either $(a, 1) \in I_1 \times I_2$ or $(1, 0) \in I_1 \times I_2$, as $I_1$ is a prime ideal of $R_1$, a contradiction. Thus $I_1$ is a weakly prime ideal of $R_1$ that is not a prime ideal and $I_2 = \{0\}$ is a prime ideal of $R_2$.

$(2) \Rightarrow (1)$ Assume that $I_1$ is a weakly prime ideal of $R_1$ that is not a prime ideal and $I_2 = \{0\}$ is
a prime ideal of $R_2$. We show that $I_1 \times I_2$ need not be a weakly $1$-absorbing prime ideal of $R$. Since $I_1 \neq 0$, there exists $0\neq x\in I_1$, and so $(0, 0)\neq (a,1)(a,1)(1,0) = (a^2, 0) \in I_1 \times \{0\}$. Since neither $(a,1)(a,1) = (a^2, 1) \in I_1 \times \{0\}$ nor $(1,0) \notin I_1 \times \{0\}$ and $I_1$ is proper, we conclude that $I_1 \times I_2$ is not a weakly $1$-absorbing prime ideal of $R$.
}
\end{proof}


In the following, we show that in a decomposable ring $R = R_1 \times R_2 \times R_3$ every nonzero weakly $1$-absorbing prime ideal of $R$ is $1$-absorbing prime.

\begin{thm} \label{315}
  Suppose that $R = R_1 \times R_2 \times R_3$ is a decomposable ring, where $R_1$, $R_2$ and $R_3$ are rings and $I = I_1 \times I_2 \times I_3$ is a nonzero proper ideal of $R$. Then $I$ is a weakly $1$-absorbing prime ideal if and only if $I$ is a $1$-absorbing prime ideal.

\end{thm}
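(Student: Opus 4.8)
The implication ``$1$-absorbing prime $\Rightarrow$ weakly $1$-absorbing prime'' is immediate from the definitions, so the content lies entirely in the converse, and my plan is to prove it without any element-chasing in the three-factor ring, by reducing to the two-factor results already established (Theorems \ref{311}, \ref{312} and \ref{2131}). The crucial observation is that both ``weakly $1$-absorbing prime'' and ``$1$-absorbing prime'' are intrinsic properties of the ideal $I$ inside $R$: they are defined purely through the multiplication of $R$ and are insensitive to how we choose to display $R$ as a direct product. Thus for any regrouping $R \cong R_i \times (R_j \times R_k)$ with $\{i,j,k\} = \{1,2,3\}$, the ideal $I$ corresponds to $I_i \times (I_j \times I_k)$, and I am free to apply the two-factor theorems to whichever grouping is convenient.

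First I would count how many of $I_1, I_2, I_3$ are nonzero; since $I \neq \{0\}$, at least one is. If at least two are nonzero, I pick a coordinate $i$ with $I_i \neq \{0\}$ and group the other two together as $R = R_i \times (R_j \times R_k)$; because a second ideal is nonzero, the grouped ideal $I_j \times I_k$ is also nonzero, so $I$ appears as a product of two nonzero ideals. Theorem \ref{312} then applies verbatim, and the equivalence of its statements (1) and (3) yields exactly that $I$ is weakly $1$-absorbing prime if and only if it is $1$-absorbing prime. Note that Theorem \ref{312} permits a grouped ideal to be the whole factor ring, so no further fuss is needed when, say, $I_i = R_i$.

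The remaining, and genuinely harder, case is when exactly one ideal is nonzero; by the symmetry of the three factors I may take $I_1 \neq \{0\}$ and $I_2 = I_3 = \{0\}$, so $I = I_1 \times \{0\} \times \{0\}$. Here lies the main obstacle: \emph{no} two-factor regrouping leaves two nonzero grouped ideals, so Theorem \ref{312} is unavailable and I must fall back on the degenerate two-factor statements. I would split on $I_1$. If $I_1 = R_1$, then grouping $R = (R_2 \times R_3) \times R_1$ presents $I$ as $\{0\} \times R_1$, the product of the proper ideal $\{0\}$ of $R_2 \times R_3$ with the whole ring $R_1$; Theorem \ref{311} gives the equivalence at once. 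If $I_1$ is a nonzero proper ideal of $R_1$, I would argue by contradiction: assuming $I$ is weakly $1$-absorbing prime but not $1$-absorbing prime, the grouping $R = R_1 \times (R_2 \times R_3)$ presents $I$ as $I_1 \times \{0\}$ with $I_1$ nonzero proper and $\{0\}$ proper in $R_2 \times R_3$, so the implication (1) $\Rightarrow$ (2) of Theorem \ref{2131} applies and forces $\{0\}$ to be prime in $R_2 \times R_3$. This is impossible, since a product of two nonzero rings always has zero-divisors (for instance $(1,0)(0,1) = (0,0)$), so $\{0\}$ is never prime there. The contradiction shows $I$ is $1$-absorbing prime in this subcase as well, which finishes the proof.
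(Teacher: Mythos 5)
Your proof is correct, but it takes a genuinely different route from the paper's. The paper argues directly with elements: it picks a nonzero $(a,b,c)\in I$, factors it as $(a,1,1)(1,b,1)(1,1,c)$, and applies the weakly $1$-absorbing prime condition once to conclude that either $(a,b,1)\in I$, forcing $I_3=R_3$, or $(1,1,c)\in I$, forcing $I_1=R_1$ and $I_2=R_2$; in either case $I$ has a full factor, and Theorem \ref{311} (applied after the same regrouping trick you make explicit) finishes the proof. Your argument instead avoids all element computation: you split according to how many of the $I_j$ are nonzero and recycle the two-factor classification, using Theorem \ref{312} when two components are nonzero, Theorem \ref{311} when the unique nonzero component is the whole factor, and Theorem \ref{2131} together with the fact that $\{0\}$ is never prime in a product of two nonzero rings for the remaining case. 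What the paper's approach buys is brevity and self-containment (only Theorem \ref{311} plus one use of the definition); what yours buys is robustness and modularity: the paper's factorization silently requires $(a,1,1)$, $(1,b,1)$, $(1,1,c)$ to be nonunit, i.e.\ $a$, $b$, $c$ nonunit in their respective factors, and when some coordinate of the chosen element is a unit the definition cannot be invoked with that factorization, so the paper's proof needs a small (easily supplied but absent) patch; your structural case analysis sidesteps this issue entirely, at the cost of leaning on the heavier Theorems \ref{312} and \ref{2131}. Your explicit justification that both properties are isomorphism-invariant, so that regrouping $R\cong R_i\times(R_j\times R_k)$ is legitimate, is also a point the paper uses tacitly without comment.
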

\begin{proof}
{Let $I = I_1 \times I_2 \times I_3$ be a nonzero weakly $1$-absorbing prime ideal of $R$. Then there exists an element
$(0,0,0) \neq (a,b,c) \in I$. Since $(a,1,1)(1,b,1)(1,1,c) = (a,b,c)$,  either
$(a,b,1) \in I$ or $(1,1,c) \in I$. Hence either $I_3 = R_3$ or $I_1 = R_1$ and $I_2 = R_2$, and so
$I = I_1 \times I_2 \times R_3$ or $I = R_1 \times R_2 \times I_3$. Thus, by Theorem \ref{311}, $I$ is a $1$-absorbing prime ideal of
$R$. The converse is clear.
}
\end{proof}

\begin{cor}
  Suppose that $R = R_1 \times R_2 \times R_3$ is a decomposable ring, where $R_1$, $R_2$ and $R_3$ are rings and  $I = I_1 \times I_2 \times I_3$ is a nonzero ideal of $R$. Then the following statements are equivalent.

$(1)$ $I = I_1 \times I_2 \times I_3$ is a weakly $1$-absorbing prime ideal of $R$.

$(2)$ $I = I_1 \times I_2 \times I_3$ is a $1$-absorbing prime ideal of $R$.

$(3)$  Either $I = I_1 \times I_2 \times I_3$
such that for some $k \in \{1, 2, 3\}$, $I_k$ is a $1$-absorbing prime ideal of $R_k$, and $I_j = R_j$ for every $j \in \{1, 2, 3\} - \{k\}$, or $I = I_1 \times I_2 \times I_3$ such that for some $k, m \in \{1, 2, 3\}$, $I_k$ is a prime ideal of $R_k$, $I_m$ is a prime ideal of $R_m$, and $I_j = R_j$ for every $j \in \{1, 2, 3\} - \{k,m\}$.

$(4)$ $I = I_1 \times I_2 \times I_3$ is a prime ideal of $R$.
\end{cor}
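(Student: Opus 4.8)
The plan is to establish the four equivalences by reducing everything to the already-proved binary- and ternary-product results, rather than re-checking the weakly $1$-absorbing prime condition from scratch. First I would dispose of the degenerate case $I=R$ (in which all four statements fail, so the equivalence is vacuous) and assume $I$ is a nonzero proper ideal. Then $(1)\Leftrightarrow(2)$ is nothing but Theorem \ref{315}. For $(2)\Leftrightarrow(4)$ I would use that $R=R_1\times R_2\times R_3$ is a decomposable, hence non-local, ring: by \cite[Theorem 2.4]{yassine} every $1$-absorbing prime ideal of a non-local ring is prime, while every prime ideal is trivially $1$-absorbing prime. This already chains three of the four conditions together, so the genuine content is the structural description in $(3)$.

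To obtain $(2)\Leftrightarrow(3)$ I would regroup $R=R_1\times S$ with $S=R_2\times R_3$ and write $I=I_1\times(I_2\times I_3)$. Applying Theorem \ref{312} to the binary product $R_1\times S$ splits the possibilities into three branches: $I_1=R_1$ with $I_2\times I_3$ a $1$-absorbing prime ideal of $S$; $I_2\times I_3=S$ with $I_1$ a $1$-absorbing prime ideal of $R_1$; or $I_1$ and $I_2\times I_3$ both prime. I would then recurse, applying Theorem \ref{312} once more to $S=R_2\times R_3$ to describe its $1$-absorbing prime (and prime) ideals, together with the standard fact that a prime ideal of a binary product has one proper prime component and one full component. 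Bookkeeping over these nested cases should collapse exactly to the two families listed in $(3)$: either a single component is $1$-absorbing prime and the other two are the whole ring, or two components are prime and the third is the whole ring. Theorem \ref{311} handles the one-proper-component subcases cleanly, matching ``$I_k$ is $1$-absorbing prime in $R_k$ with the other factors full'' to ``$I$ is $1$-absorbing prime in $R$''.

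The delicate points, and where I expect the real work to lie, are the boundary cases and the nonunit conditions. The recursive use of Theorem \ref{312} presupposes that the intermediate ideals (such as $I_2\times I_3$) are nonzero, so I would separately treat configurations in which a factor equals $\{0\}$, using the element-chasing argument from the proof of Theorem \ref{315}, which forces one block of $I$ to be the entire ring and thereby cuts down the number of proper components. The subtler obstacle is reconciling the ``two prime components, one full ring'' branch of $(3)$ with the primeness asserted in $(4)$: here I would lean on non-locality and \cite[Theorem 2.4]{yassine} to pass between the $1$-absorbing prime and prime properties, and I would have to check carefully that transferring the $1$-absorbing prime condition between $R$ and its factors respects the nonunit hypotheses, since an element such as $(1,0)$ is a nonunit of the product even when its first coordinate is a unit. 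Aligning the recursive classification in $(3)$ with the primeness statement $(4)$ is the step I expect to require the most care.
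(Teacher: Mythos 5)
Your skeleton is essentially the paper's own: you get $(1)\Leftrightarrow(2)$ from Theorem \ref{315}, $(2)\Leftrightarrow(4)$ from non-locality of $R$ together with \cite[Theorem 2.4]{yassine}, and you push all the remaining content into the structural statement $(3)$. The only real difference is that you classify via repeated regroupings $R_1\times(R_2\times R_3)$ and Theorem \ref{312}, where the paper element-chases directly in the ternary product (your way needs the extra zero-component patches you mention, since Theorem \ref{312} assumes both components nonzero; the paper's direct argument avoids this). That difference is cosmetic.

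The genuine gap is the direction $(3)\Rightarrow(2)$, which you delegate to Theorem \ref{311} (``matching `$I_k$ is $1$-absorbing prime in $R_k$ with the other factors full' to `$I$ is $1$-absorbing prime in $R$'\,'') plus a deferred ``careful check'' of the nonunit bookkeeping. That check cannot succeed: the transfer is false, for exactly the reason you flag, namely that elements such as $(1,0,1)$ are nonunits of the product with unit coordinates. Concretely, $I_1=\{0,4\}\subseteq\mathbb{Z}_8$ is $1$-absorbing prime (every product of two nonunits of $\mathbb{Z}_8$ lies in $4\mathbb{Z}_8$) but not prime, yet $I=I_1\times R_2\times R_3$, which satisfies branch one of $(3)$, is not even weakly $1$-absorbing prime: $(1,0,1)(2,1,1)(2,1,1)=(4,0,1)\neq 0$ lies in $I$, while $(1,0,1)(2,1,1)=(2,0,1)\notin I$ and $(2,1,1)\notin I$. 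Branch two of $(3)$ fails as well: the paper's own Example \ref{ex}(2), $I=R_1\times\{0\}\times\{0\}$ with all $R_i$ fields, has two prime components and one full component but is not weakly $1$-absorbing prime. So $(3)$ implies none of $(1)$, $(2)$, $(4)$, and your plan to reconcile $(3)$ with $(4)$ via \cite[Theorem 2.4]{yassine} would instead surface a contradiction: that theorem forces every $1$-absorbing prime ideal of the non-local ring $R$ to have a prime component structure, which branch one of $(3)$ does not provide. To be fair, the paper commits the same error---its proof dismisses $(3)\Rightarrow(2)$ as clear, and the flaw traces back to the unproved implications $(3)\Rightarrow(2)\Rightarrow(1)$ of Theorem \ref{311}---but a correct treatment has to repair the statement itself: branch one of $(3)$ must require $I_k$ to be prime (equivalently, $(3)$ must become the standard description of prime ideals of a finite product, with branch two deleted), after which your reduction does go through.
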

\begin{proof}
{ $(1) \Leftrightarrow  (2)$ It follows from Theorem \ref{315}.

$(2) \Rightarrow (3)$ Since $I = I_1 \times I_2 \times I_3$ is a $1$-absorbing prime ideal of $R$, we have either $I_3 \neq R_3$ or $I_2 \neq R_2$ or $I_1 \neq R_1$. If $I_3 \neq R_3$, then by the proof of Theorem \ref{315}, $I_2 = R_2$ and $I_1 = R_1$, and so we are done. If $I_2 \neq R_2$, then by the proof of Theorem \ref{315}, either $I_3 = R_3$ and $I_1 = R_1$ or $I_3 = R_3$ and $I_1 \neq R_1$. The first case is clear, so assume that $I_3 = R_3$ and $I_1 \neq R_1$. We show that $I_1$ is a prime ideal of $R_1$ and $I_2$ is a prime of $R_2$. Suppose that $a,b \in R_1$ such that $ab \in I_1$, and $c,d \in R_2$ such that $cd \in I_2$. Then $(0,0,0) \neq (a,1,1)(1,c,1)(b,d,1) = (ab,cd,1) \in I$. Hence either $a\in I_1$ or $b\in I_1$, and thus $I_1$ is a prime ideal of $R_1$. Similarly, since $(0,0,0) \neq (a,1,1)(b,c,1)(1,d,1) = (ab,cd,1) \in I$, we conclude that either $c\in I_2$ or $d\in I_2$. Hence $I_2$ is a prime ideal of $R_2$. Finally, assume that $I_1 \neq R_1$ and $I_3 = R_3$. By an argument similar to that we applied above, we conclude that $I_1$ is a prime ideal of $R_1$ and $I_2$ is a prime ideal of $R_2$.

$(3) \Rightarrow (2)$ This is clear.

$(2) \Leftrightarrow  (4)$ It follows from \cite[Theorem 2.4]{yassine}. }
\end{proof}

 The next theorem states that if $I$ is a weakly $1$-absorbing prime ideal of a ring $R$ and $0 \neq I_1I_2I_3 \subseteq I$ for some ideals $I_1, I_2, I_3$ of $R$ such that $I$ is free triple-zero with respect to $I_1I_2I_3$, then $ I_1I_2 \subseteq I$ or $I_3\subseteq I$. First, we need the following lemma.

\begin{lem} \label{lem}
Let $I$ be a weakly $1$-absorbing prime ideal of a ring $R$. If
$abJ \subseteq I$ for some nonunit elements $a,b\in R$ and a proper ideal $J$ of $R$ such that $(a,b,c)$ is not a triple-zero of $I$ for every $c \in J$, then $ab \in I$ or $J \subseteq I$.
\end{lem}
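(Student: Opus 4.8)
The plan is to prove the first disjunct by contraposition: assuming $ab \notin I$, I would show that $J \subseteq I$. To this end, fix an arbitrary element $c \in J$; the goal reduces to proving $c \in I$, since $c$ is chosen arbitrarily. A crucial but easy preliminary observation is that, because $J$ is a \emph{proper} ideal, it contains no units, and hence every $c \in J$ is a nonunit of $R$. This is exactly the hypothesis needed to apply Definition \ref{dfn}, which only concerns nonunit triples; together with the nonunit elements $a$ and $b$ from the statement, we are therefore free to invoke the weakly $1$-absorbing prime property on the triple $(a,b,c)$ whenever $abc \neq 0$.

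Next I would split into two cases according to whether $abc$ vanishes. Since $abJ \subseteq I$, we always have $abc \in I$. In the first case, $abc \neq 0$, so $0 \neq abc \in I$ with $a,b,c$ all nonunit; the weakly $1$-absorbing prime property yields $ab \in I$ or $c \in I$, and as we have assumed $ab \notin I$, this forces $c \in I$. In the second case, $abc = 0$, and here I would use the hypothesis that $(a,b,c)$ is not a triple-zero of $I$. Unwinding the definition, the assertion that $(a,b,c)$ fails to be a triple-zero means $abc \neq 0$, or $ab \in I$, or $c \in I$; since the first two alternatives are excluded (we have $abc = 0$ and $ab \notin I$), we again obtain $c \in I$.

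In both cases $c \in I$, and since $c \in J$ was arbitrary, this establishes $J \subseteq I$, completing the argument. There is no serious obstacle here: the proof is essentially a direct case analysis. The only points requiring care are the observation that properness of $J$ guarantees its elements are nonunit, so that Definition \ref{dfn} applies, and the correct unfolding of the ``not a triple-zero'' condition into its disjunctive form, which is precisely what handles the degenerate case $abc = 0$ that the weakly $1$-absorbing prime property alone cannot control.
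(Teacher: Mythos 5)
Your proof is correct and is essentially the paper's own argument: the paper phrases it as a contradiction (picking $j \in J \setminus I$ when both conclusions fail), while you argue directly for each $c \in J$, but both rest on the same case split between $abc \neq 0$ (handled by the weakly $1$-absorbing prime property) and $abc = 0$ (handled by unfolding the not-a-triple-zero hypothesis). Your explicit remark that properness of $J$ makes every $c \in J$ nonunit is a point the paper leaves implicit, so nothing is missing.
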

\begin{proof}
{Suppose that $abJ \subseteq I$, but $ab \notin I$ and $J \nsubseteq I$. Then there exists an element $j \in J\setminus I$. But $(a,b,j)$ is not a triple-zero of $I$ and $abj \in I$ and $ab \notin I$ and $j \notin I$, a contradiction.
}
\end{proof}

\begin{thm} \label{fin}
Suppose that $I$ is a proper ideal of a ring $R$. Then the following statements are
equivalent.

$(1)$ $I$ is a weakly $1$-absorbing prime ideal of $R$.

$(2)$ For any proper ideals $I_1, I_2, I_3$ of $R$ such that $0 \neq I_1I_2I_3 \subseteq I$ and $I$ is free triple-zero with respect to $I_1I_2I_3$, we have either
$I_1I_2 \subseteq I$ or $I_3 \subseteq I$.
\end{thm}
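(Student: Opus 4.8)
The plan is to prove the two implications separately. I regard $(1)\Rightarrow(2)$ as the substantive direction, where Lemma \ref{lem} does the heavy lifting, and I reduce $(2)\Rightarrow(1)$ to the case of principal ideals. Throughout I will exploit the elementary fact that a proper ideal of $R$ contains no units (otherwise it would contain $1$), so any element drawn from one of the $I_j$ is automatically a nonunit, and the nonunitness hypotheses of Lemma \ref{lem} are satisfied for free.

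For $(1)\Rightarrow(2)$, assume $I$ is weakly $1$-absorbing prime and that $I_1,I_2,I_3$ are proper ideals with $0\neq I_1I_2I_3\subseteq I$ and $I$ free triple-zero with respect to $I_1I_2I_3$; suppose $I_3\nsubseteq I$ and aim to prove $I_1I_2\subseteq I$. Fix arbitrary $a\in I_1$ and $b\in I_2$, both nonunits. Then $abI_3\subseteq I_1I_2I_3\subseteq I$, the ideal $I_3$ is proper, and by the free triple-zero hypothesis $(a,b,c)$ is not a triple-zero of $I$ for any $c\in I_3$. Hence Lemma \ref{lem} applies and gives $ab\in I$ or $I_3\subseteq I$; since $I_3\nsubseteq I$, we must have $ab\in I$. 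As $a$ and $b$ were arbitrary, every generating product of $I_1I_2$ lies in $I$, and since an arbitrary element of $I_1I_2$ is a finite sum $\sum a_ib_i$ of such products, we conclude $I_1I_2\subseteq I$. I expect no obstruction in this direction.

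For $(2)\Rightarrow(1)$, let $a,b,c$ be nonunits with $0\neq abc\in I$, and set $I_1=(a)$, $I_2=(b)$, $I_3=(c)$. These are proper because $a,b,c$ are nonunits, and since $R$ is commutative with identity one has $I_1I_2I_3=(abc)$, so $0\neq I_1I_2I_3\subseteq I$. The aim is to invoke $(2)$ to obtain $I_1I_2=(ab)\subseteq I$ or $I_3=(c)\subseteq I$, i.e. $ab\in I$ or $c\in I$, which is precisely the weakly $1$-absorbing condition. The main obstacle is that applying $(2)$ first requires verifying that $I$ is free triple-zero with respect to $(a)(b)(c)$, and this is the delicate step: it is not automatic, since a triple-zero drawn from $(a)\times(b)\times(c)$ has the form $(ra,sb,tc)$ with $rst\,abc=0$, $rs\,ab\notin I$, and $tc\notin I$, and the mere nonvanishing of $abc$ does not rule out such a configuration. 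My plan would therefore be to split into cases: if $I$ is free triple-zero with respect to $(a)(b)(c)$, then $(2)$ applies immediately and finishes the proof; if it is not, a genuine triple-zero $(ra,sb,tc)$ of the principal ideals exists, and I would attempt to combine it with the relation $0\neq abc\in I$ to either pass to a different triple of principal ideals that is free triple-zero, or to derive $ab\in I$ or $c\in I$ directly. Reconciling the existence of such a triple-zero with the hypotheses of $(2)$ is exactly the crux of this direction and the point I expect to require the most care.
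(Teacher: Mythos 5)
Your direction $(1)\Rightarrow(2)$ is correct and is essentially the paper's own argument: both of you feed Lemma \ref{lem} with elements $a\in I_1$, $b\in I_2$ (automatically nonunits, as you note), the only difference being that the paper argues contrapositively (assume $I_1I_2\nsubseteq I$, pick $ab\notin I$, conclude $I_3\subseteq I$) while you fix $I_3\nsubseteq I$ and sweep over all pairs to get $I_1I_2\subseteq I$; these are the same proof. The gap is in $(2)\Rightarrow(1)$: you set $I_1=(a)$, $I_2=(b)$, $I_3=(c)$, correctly observe that $(2)$ cannot be invoked until one verifies that $I$ is free triple-zero with respect to $(a)(b)(c)$, and then stop with a plan (``split into cases and attempt to pass to a different triple or derive the conclusion directly'') rather than an argument. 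As written, that direction is simply not proved.

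You should know, however, that the obstacle you refused to gloss over is real and, in fact, fatal. The paper's own proof of $(2)\Rightarrow(1)$ takes the same principal ideals and applies $(2)$ immediately, never checking the free triple-zero hypothesis; that step cannot be justified, and the implication $(2)\Rightarrow(1)$ is false as stated. Take $R=\mathbb{Z}_{16}$ and $I=(8)$, the very example the paper exhibits after Theorem \ref{3.5}: condition $(1)$ fails because $0\neq 2\cdot 2\cdot 2=8\in I$ while $4\notin I$ and $2\notin I$. On the other hand, the proper ideals of $R$ are $(0),(8),(4),(2)$, and the only triple of proper ideals whose product is nonzero and contained in $I$ is $I_1=I_2=I_3=(2)$, since any factor $(4)$, $(8)$ or $(0)$ annihilates the product. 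But $I$ is not free triple-zero with respect to $(2)(2)(2)$: the nonunits $2,2,4\in(2)$ satisfy $2\cdot 2\cdot 4=0$, $2\cdot 2=4\notin I$ and $4\notin I$, so $(2,2,4)$ is a triple-zero of $I$. Hence the hypothesis of $(2)$ is never met, $(2)$ holds vacuously, and yet $(1)$ fails. So the ``case two'' you could not resolve --- a triple-zero $(ra,sb,tc)$ coexisting with $0\neq abc\in I$ --- genuinely occurs and cannot be argued away; your incomplete direction points at an error in the theorem itself, not at a defect in your strategy.
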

\begin{proof}
{$(1)\Rightarrow (2)$ Suppose that $I$ is a weakly $1$-absorbing prime ideal of $R$ and $0 \neq I_1I_2I_3 \subseteq I$ for some proper ideals $I_1, I_2, I_3$ of $R$ such that $I_1I_2 \nsubseteq I$ and $I$ is free triple-zero with respect to $I_1I_2I_3$. Then there are nonunit elements $a \in I_1$ and $b \in I_2$ such that $ab \notin I$. Since $abI_3 \subseteq I$, $ab \notin I$ and $(a,b,c)$ is not a triple-zero of $I$ for every $c \in J$, it follows from Lemma \ref{lem} that $J \subseteq I$.

$(2) \Rightarrow (1)$ Suppose that $0\neq abc \in I$ for some nonunit elements $a,b,c \in R$ and
$ab \notin I$. Suppose also that $I_1 = aR, I_2 = bR$, and $I_3 = cR$. Then $0\neq I_1I_2I_3 \subseteq I$ and $I_1I_2 \nsubseteq I$. Hence $I_3 = cR\subseteq I$, and thus $c \in I$.
}
\end{proof}
Suppose that $I$ is an ideal of a ring $R$. It was shown in \cite[Theorem 2.7]{yassine}, if $I$ is $1$-absorbing prime and $I_1I_2I_3 \subseteq I$ for some proper ideals $I_1, I_2, I_3$ of $R$, then $I_1I_2 \subseteq I$ or $I_3 \subseteq I$. We end this section with the following question: if $I$ is weakly $1$-absorbing prime and $0\neq I_1I_2I_3 \subseteq I$ for some proper ideals $I_1, I_2, I_3$ of $R$, does it imply that $I_1I_2 \subseteq I$ or $I_3 \subseteq I$?

\begin{center}{\section{Rings in which every proper ideal is weakly $1$-absorbing prime
}}\end{center}

In this section we study rings in which every proper ideal is weakly $1$-absorbing prime. To prove Theorem \ref{442}, the following lemma is needed.

\begin{lem} \label{44}
 Let $R$ be a ring. Then for every $a,b,c \in J(R)$, the ideal $I = Rabc$ is weakly $1$-absorbing prime if and only if $abc = 0$.
\end{lem}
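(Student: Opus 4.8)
The plan is to treat the two directions separately. The reverse implication is essentially trivial, while the forward one rests entirely on the standard characterization of the Jacobson radical: an element $x$ lies in $J(R)$ precisely when $1-rx$ is a unit of $R$ for every $r \in R$. I would also record at the outset that every element of $J(R)$ is a nonunit (being contained in each maximal ideal), so that the definition of weakly $1$-absorbing prime is applicable to $a,b,c$.

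First I would dispose of the ``if'' direction. If $abc = 0$, then $I = Rabc = \{0\}$, and the zero ideal is vacuously weakly $1$-absorbing prime: the defining hypothesis $0 \neq xyz \in I$ can never be satisfied when $I = \{0\}$, so the required conclusion holds with no content. Note that $I$ is genuinely proper here, since $R \neq 0$; in fact $abc$ is always a nonunit (a unit product would force $a$ to be a unit), so $I = Rabc \neq R$ in every case.

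For the ``only if'' direction I would argue by contradiction. Suppose $I = Rabc$ is weakly $1$-absorbing prime but $abc \neq 0$. Since $a,b,c \in J(R)$ are nonunit elements and $0 \neq abc \in I$, the definition forces $ab \in I$ or $c \in I$. In the first case I would write $ab = rabc$ for some $r \in R$, rearranged as $ab(1 - rc) = 0$; because $c \in J(R)$ and $J(R)$ is an ideal, $rc \in J(R)$, hence $1 - rc$ is a unit and $ab = 0$, giving $abc = 0$, a contradiction. In the second case I would write $c = rabc$, rearranged as $c(1 - rab) = 0$; here $ab \in J(R)$ since $a \in J(R)$ and $J(R)$ absorbs multiplication, whence $1 - rab$ is a unit and $c = 0$, again forcing $abc = 0$. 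Both cases contradict $abc \neq 0$, so $abc = 0$.

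The only step requiring genuine care is the passage from the absorption relation ($ab = rabc$ or $c = rabc$) to the annihilation equation ($ab(1-rc)=0$ or $c(1-rab)=0$) and then to the vanishing of a factor. This is exactly where the hypothesis $a,b,c \in J(R)$---rather than mere nonunitness---is indispensable: I must invoke both that $J(R)$ is an ideal (to guarantee $rc, rab \in J(R)$) and the unit criterion for the Jacobson radical. Beyond this there is no real obstacle, and the argument is short.
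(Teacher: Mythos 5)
Your proof is correct and follows essentially the same route as the paper's: both directions are handled identically, with the forward implication obtained by applying the definition to $0 \neq abc \in I$ and then killing the factor $1-rc$ (resp. $1-rab$) via the unit criterion for the Jacobson radical. You also carefully justify the details the paper leaves implicit (properness of $I$, nonunitness of elements of $J(R)$, and the vacuous case $I=\{0\}$), which is a welcome addition but not a different argument.
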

\begin{proof}
{Suppose that $R$ is a ring, $a,b,c \in J(R)$ and $I = Rabc$ is an ideal of $R$. One can easily see that $I$ is a weakly $1$-absorbing prime ideal of $R$, if $abc = 0$. So assume that $I$ is a weakly $1$-absorbing prime ideal of $R$ and $abc \neq 0$.  Hence $a,b,c$ are nonunit, and so either $ab \in I$ or $c \in I$. If $ab \in I$, then $ab = abcx$ for some $x\in R$, and thus $ab(1-cx) = 0$.
This follows that $ab=0$, since $cx \in J(R)$ which means that $1 - cx$ is  unit. Therefore $abc = 0$, a contradiction. If $c \in I$, by a similar argument, we can see that $c(1-abk) = 0$ for some $x\in R$. This implies that $c=0$, a contradiction. Thus $abc = 0$.
}
\end{proof}

\begin{thm}\label{442}
  Let $R$ be a  local ring with a unique maximal ideal $M$. Then every proper ideal of $R$ is
weakly $1$-absorbing prime if and only if $M^3 = \{0\}$.
\end{thm}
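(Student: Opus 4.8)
The plan is to prove both implications separately, using the fact that in a local ring $R$ the set of nonunit elements coincides with $M = J(R)$, together with Lemma \ref{44}.

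First I would treat the easy direction: assume $M^3 = \{0\}$ and let $I$ be any proper ideal of $R$. Since $R$ is local, any nonunit elements $a, b, c \in R$ lie in $M$, so their product $abc$ lies in $M^3 = \{0\}$; that is, $abc = 0$. Consequently the hypothesis $0 \neq abc \in I$ of Definition \ref{dfn} can never be met, and $I$ is vacuously weakly $1$-absorbing prime. This disposes of the ``if'' direction with no real computation.

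For the converse, suppose every proper ideal of $R$ is weakly $1$-absorbing prime, and fix arbitrary elements $a, b, c \in M$. Because $M$ is an ideal, $abc \in M$, so $Rabc \subseteq M \subsetneq R$ is a proper ideal and is therefore weakly $1$-absorbing prime by hypothesis. Since $J(R) = M$ in a local ring, Lemma \ref{44} applies to the triple $a, b, c$ and forces $abc = 0$. As $a, b, c \in M$ were arbitrary, every generator $abc$ of $M^3$ vanishes, and since $M^3$ is additively generated by such products, $M^3 = \{0\}$.

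I expect no genuine obstacle here: the substantive content has been front-loaded into Lemma \ref{44}, whose proof already exploits that $1 - cx$ is a unit whenever $cx \in J(R)$. The only points requiring a moment's care are recording that the nonunits equal $M$ in the local setting (needed both to apply the definition vacuously and to guarantee that $Rabc$ is proper) and observing that $M^3 = \{0\}$ follows from the vanishing of the individual products $abc$ precisely because $M^3$ is spanned additively by them.
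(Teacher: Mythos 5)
Your proof is correct and follows essentially the same route as the paper: the forward direction applies Lemma \ref{44} to the proper ideals $Rabc$ with $a,b,c \in M = J(R)$, and the reverse direction is the same vacuity argument, noting that nonunits lie in $M$ so no nonzero product $abc$ can land in $I$ when $M^3 = \{0\}$. Your write-up is in fact slightly more careful than the paper's, since you explicitly record why $Rabc$ is proper and why the vanishing of all products $abc$ gives $M^3 = \{0\}$.
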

\begin{proof}
{Suppose that $R$ is local with a maximal ideal $M$ and every proper ideal of $R$ is weakly $1$-absorbing prime. It follows from Lemma \ref{44} that $abc = 0$ for every $a,b,c \in M$, since $abcR$ is a weakly $1$-absorbing prime ideal. Therefore $M^3 = \{0\}$. Conversely, suppose that $M^3 = \{0\}$ and $I$ is a nonzero proper ideal of $R$. Suppose also that $0\neq abc \in I$ for some nonunit elements $a,b,c$ of $R$. Then $a,b,c\in M$. But $M^3 = \{0\}$ and $0\neq abc$, hence either $a$ is a unit of $R$ or $b$ is a unit of $R$ or $c$ is a unit of $R$, a contradiction. Thus $I$ is a weakly $1$-absorbing prime ideal.
}
\end{proof}
\begin{cor} \label{3.3.3}
Suppose that $R$ is a local ring with a unique maximal ideal $M$ such that $M^2 = \{0\}$. Then
every proper ideal of $R$ is a $1$-absorbing prime ideal of $R$.
\end{cor}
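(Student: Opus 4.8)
The plan is to verify the defining condition of a $1$-absorbing prime ideal directly, exploiting that the hypothesis $M^2=\{0\}$ collapses the product of any two nonunits to zero. Let $I$ be an arbitrary proper ideal of $R$, and suppose $a,b,c\in R$ are nonunit elements with $abc\in I$. First I would invoke locality: in a local ring the nonunits are precisely the elements of the unique maximal ideal, so $a,b,c\in M$. In particular $ab\in M^2$.

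Since $M^2=\{0\}$, this gives $ab=0$, and because $I$ is an ideal it contains $0$; hence $ab\in I$. This already supplies one of the two alternatives ($ab\in I$ or $c\in I$) demanded by the definition, so $I$ is $1$-absorbing prime. As $I$ was an arbitrary proper ideal, every proper ideal of $R$ is $1$-absorbing prime, which is the claim.

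There is essentially no obstacle here. The only point worth flagging is that $M^2=\{0\}$ is strong enough that the dichotomy $abc=0$ versus $abc\neq 0$ — which is exactly what separates weak $1$-absorbing primeness from ordinary $1$-absorbing primeness, and which forced the triple-zero analysis in Lemma \ref{3.4} and Theorem \ref{3.5} — never needs to be confronted: for any two nonunits $a,b$ one automatically has $ab=0\in I$, so the $c\in I$ branch is irrelevant. One could instead route the argument through Theorem \ref{442} by noting that $M^2=\{0\}$ forces $M^3=\{0\}$, whence every proper ideal is weakly $1$-absorbing prime, and then upgrade to $1$-absorbing prime by checking no triple-zero is an obstruction; but since $ab=0$ for all nonunit $a,b$, the direct computation above is both shorter and self-contained.
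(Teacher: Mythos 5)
Your proof is correct, and it takes a genuinely more direct route than the paper's. Your key observation is that in a local ring with $M^2=\{0\}$, \emph{any} product of two nonunits $a,b\in M$ satisfies $ab\in M^2=\{0\}\subseteq I$, so the alternative $ab\in I$ in the definition of a $1$-absorbing prime ideal holds unconditionally and the distinction between $abc=0$ and $abc\neq 0$ never has to be confronted. The paper instead argues in two stages: it first invokes Theorem \ref{442} (since $M^2=\{0\}$ forces $M^3=\{0\}$) to conclude that $I$ is weakly $1$-absorbing prime, which disposes of the case $abc\neq 0$; it then handles the remaining case $abc=0$ by appealing to the fact that $M^2=\{0\}$ is a primary ideal (its radical is the maximal ideal $M$), splitting into $c\in M^2\subseteq I$ or $ab\in M$, and in the latter case concluding $ab=0\in I$. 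What each approach buys: the paper's version displays the corollary as a consequence of its general machinery (Lemma \ref{44} and Theorem \ref{442}), fitting the narrative of the section; your version is shorter, self-contained, and makes transparent why the triple-zero obstruction that drives Lemma \ref{3.4} and Theorem \ref{3.5} is vacuous here. Indeed, the final step of the paper's own case analysis (``$ab\in M$ implies $ab=0$'') is nothing but your observation $ab\in M^2=\{0\}$, so the detour through Theorem \ref{442} and the primary property of $\{0\}$ is logically superfluous; your direct verification is the cleaner proof.
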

\begin{proof}
{ Suppose that $R$ is local with a maximal ideal $M$ and $I$ is a proper ideal of $R$. It follows from Theorem \ref{442} that $I$ is weakly $1$-absorbing prime, because $M^3 = \{0\}$. Therefore assume that $0 = abc \in I$ for some nonunit elements $a,b,c$ of $R$. This means that $a,b,c \in M$. Since $M^2 = \{0\}$, $abc = 0$ and $M^2$ is primary, we conclude that either $c \in M^2\subseteq I$ or $ab \in M$. In the second case, it easy to see that $ab = 0\in I$. Thus $I$ is a
 $1$-absorbing prime ideal of $R$.
}
\end{proof}

\begin{thm} \label{yas4}
Suppose that $R_1$ and $R_2$ are  rings and $R = R_1\times R_2$. If $R_1$ and $R_2$ are local with
maximal ideals $M_1$ and $M_2$, respectively, and every proper
ideal of $R$ is a weakly $1$-absorbing prime ideal of $R$, then $M_1^2, M_2^2$ are zero ideals and either $R_1$ or $R_2$ is a field.
\end{thm}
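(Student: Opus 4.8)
The plan is to exploit the hypothesis in its strongest form: \emph{every} proper ideal of $R = R_1 \times R_2$ is weakly $1$-absorbing prime, so for any proper ideal $I$ that I build by hand, no nonunit triple may witness a failure of the defining condition of Definition \ref{dfn}. Throughout I use that $R_1$ and $R_2$ are nonzero (being local they possess a maximal ideal), so that in $R$ the elements $(1,0)$ and $(0,1)$ are nonunits, while $(a,1)$ is a nonunit whenever $a$ is a nonunit of $R_1$, and symmetrically on the other coordinate. The whole argument reduces to choosing, for each claim, a suitable proper ideal of the shape $(\text{ideal of } R_1)\times\{0\}$ together with a nonunit triple whose product is nonzero and lands in it, while the twofold product stays outside.

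First I would prove $M_1^2 = \{0\}$; the statement $M_2^2 = \{0\}$ then follows by interchanging the two factors. Suppose instead that $ab \neq 0$ for some $a,b \in M_1$. Set $I = (abR_1)\times\{0\}$, which is a proper ideal of $R$ since $R_2 \neq 0$. The elements $(a,1),\,(b,1),\,(1,0)$ are nonunits, and $(a,1)(b,1)(1,0) = (ab,0)$ is a nonzero element of $I$. However $(a,1)(b,1) = (ab,1) \notin I$ (its second coordinate is $1 \neq 0$ in $R_2$), and $(1,0)\notin I$ (since $ab$ is a nonunit, $abR_1$ is proper, so $1 \notin abR_1$). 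This contradicts the fact that $I$ is weakly $1$-absorbing prime, so $M_1^2 = \{0\}$.

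Next I would show that $R_1$ or $R_2$ is a field, that is $M_1 = \{0\}$ or $M_2 = \{0\}$. Assume to the contrary that there are nonzero $a \in M_1$ and $b \in M_2$. Take the proper ideal $I = (aR_1)\times\{0\}$ and the nonunit triple $(a,1),\,(1,b),\,(1,0)$. Their product is $(a,1)(1,b)(1,0) = (a,0)$, which is nonzero and lies in $I$, whereas $(a,1)(1,b) = (a,b)\notin I$ (because $b \neq 0$) and $(1,0)\notin I$ (because $a$ is a nonunit). Again this violates the weakly $1$-absorbing prime property of the proper ideal $I$, a contradiction. Hence $M_1 = \{0\}$ or $M_2 = \{0\}$, and together with the previous paragraph this gives the full conclusion.

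The only genuine decision in the argument is the choice of test ideal and test triple; once one places a unit entry ($1$) in the coordinate that must keep the twofold product out of $I$, and uses $(1,0)$ (a nonunit precisely because $R_2 \neq 0$) as the distinguished third factor, every membership check is immediate. Thus the main obstacle is conceptual rather than computational: recognizing that ideals of the form $(\,\cdot\,)\times\{0\}$, probed by triples in which the ``$1$'' is positioned exactly where membership should fail, are what simultaneously force $M_1^2 = M_2^2 = \{0\}$ and the collapse of one of the factors to a field.
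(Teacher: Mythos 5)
Your proof is correct, and while the overall strategy (hand-built test ideals of the form $(\,\cdot\,)\times\{0\}$ probed by nonunit triples with strategically placed $1$'s) matches the paper's, your treatment of the field claim is genuinely different. The first half is essentially identical to the paper's proof: same ideal $abR_1\times\{0\}$, same triple $(a,1),(b,1),(1,0)$; the only cosmetic difference is that you exclude $(1,0)\in I$ directly from properness of $abR_1$, whereas the paper supposes $(1,0)\in I$, writes $1=abx$, and contradicts the fact that $1-abx$ is a unit. For the second half, the paper takes $J=M_1\times\{0\}$ and the triple $(m_1,1),(1,c),(1,c)$ with $0\neq c\in M_2$; its product $(m_1,c^2)$ lies in $J$ only because $c^2=0$, so the paper's argument for the field claim \emph{depends} on the previously established fact $M_2^2=\{0\}$. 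Your triple $(a,1),(1,b),(1,0)$ applied to $aR_1\times\{0\}$ needs no such input: the factor $(1,0)$ kills the second coordinate outright, so the product $(a,0)$ lies in the ideal automatically, while $(a,1)(1,b)=(a,b)\notin I$ since $b\neq 0$ and $(1,0)\notin I$ since $aR_1$ is proper. Thus your proof of the dichotomy is logically independent of the nilpotency statement and slightly more elementary; the paper's version instead recycles the conclusion of its first paragraph. Both are sound, and yours has the cleaner logical structure.
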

\begin{proof}
{Suppose that $R_1$ and $R_2$ are local rings with maximal ideals $M_1$ and $M_2$, respectively, and $R = R_1\times R_2$. First, suppose that every proper ideal of $R$ is a weakly $1$-absorbing prime ideal of $R$ and $a,b \in M_1$ such that $ab \neq 0$. Then the ideal $I = abR_1 \times \{0\}$ of $R$ is a weakly $1$-absorbing prime ideal of $R$. Without loss of generality, we may assume that $a,b$ are nonunit. But $(0, 0) \neq (a, 1)(b, 1)(1, 0) = (ab, 0) \in I$ and $I$ is a weakly $1$-absorbing prime ideal of $R$, hence $(1, 0)\in I$, since $(a,1)(b,1)\notin I$, and so $1 = abx$ for some $x \in R_1$. This means that $1 - abx = 0$, a contradiction, as $1 - abx$ is a unit element of $R_1$. Therefore $M_1^2 = \{0\}$. By a similar argument as above and taking the ideal $I = \{0\} \times abR_1$, one can see that $M_2^2 = \{0\}$. Now, assume that $R_1$ and $R_2$ are not fields and look for a contradiction. It follows that $M_1 \neq \{0\}$, and so the ideal $J = M_1 \times \{0\}$ is a weakly $1$-absorbing prime ideal of $R$.	But $M_2^2 = \{0\}$ and $R_2$ is not a field, hence there exists $c \in M_2$ such that $c \neq 0$. Suppose that $0\neq m_1 \in M_1$. Then $(0, 0)\neq (m_1,1)(1,c)(1,c) = (m_1,c^2) = (m_1, 0) \in J = M_1 \times \{0\}$ which is a contradiction, since both $(m_1, 1)(1, c) = (m_1, c) \notin J$ and $(1,c)\notin J$. Thus either $R_1$ or $R_2$ is a field.
}
\end{proof}


Finally, we characterize rings in which every ideal is weakly $1$-absorbing prime. To this end, the following lemma is needed.

\begin{lem} \label{yas5}
$(1)$ Suppose that $R = R_1 \times R_2 \times R_3$, where $R_1$, $R_2$ and $R_3$ are  rings. If every proper ideal of $R$ is weakly $1$-absorbing prime, then $R_1, R_2, R_3$ are fields.

$(2)$ Let $R$ be a ring and every proper ideal of $R$ be weakly $1$-absorbing prime. Then $R$ has at most three maximal ideals.
\end{lem}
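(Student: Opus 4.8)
The plan is to prove the two parts separately, deriving (2) from (1). For part (1), I would argue by contraposition on each factor: assuming one of the $R_i$, say $R_1$, is not a field, I would produce a proper ideal of $R$ that fails to be weakly $1$-absorbing prime, contradicting the hypothesis. Concretely, since $R_1$ is not a field there is a nonzero nonunit $a \in R_1$; set $I = aR_1 \times \{0\} \times \{0\}$, which is a proper ideal of $R$ because $R_2, R_3 \neq \{0\}$. I would then test the triple $x = (a,1,1)$, $y = (1,0,1)$, $z = (1,1,0)$. Each of these is a nonunit of $R$ (each has at least one nonunit coordinate), their product is $xyz = (a,0,0) \in I$ with $xyz \neq (0,0,0)$ since $a \neq 0$, yet $xy = (a,0,1) \notin I$ (its third coordinate is $1$) and $z = (1,1,0) \notin I$ (since $1 \in aR_1$ would force $a$ to be a unit). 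This violates Definition \ref{dfn}, the desired contradiction; running the symmetric argument with the nonzero nonunit placed in the second or third coordinate shows that $R_1, R_2, R_3$ are all fields.

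For part (2), the plan is to reduce to (1) by passing to a suitable quotient. Suppose, for contradiction, that $R$ has at least four distinct maximal ideals $M_1, M_2, M_3, M_4$, and put $J = M_1 \cap M_2 \cap M_3 \cap M_4$. Since distinct maximal ideals are pairwise comaximal, the Chinese Remainder Theorem gives $R/J \cong R/M_1 \times R/M_2 \times R/M_3 \times R/M_4$, a product of four fields. The key point to establish is that the hypothesis that every proper ideal is weakly $1$-absorbing prime descends to the quotient $R/J$: every proper ideal of $R/J$ has the form $I/J$ for a proper ideal $I \supseteq J$ of $R$, and one checks that if $\overline{0} \neq \overline{a}\,\overline{b}\,\overline{c} \in I/J$ with $\overline{a}, \overline{b}, \overline{c}$ nonunits, then $abc \in I \setminus J$ (so in particular $abc \neq 0$) and $a, b, c$ are nonunits of $R$; applying the weakly $1$-absorbing prime property of $I$ then yields $\overline{a}\,\overline{b} \in I/J$ or $\overline{c} \in I/J$.

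Having transferred the hypothesis to $R/J \cong R/M_1 \times R/M_2 \times (R/M_3 \times R/M_4)$, I would regard this as a product of exactly three nonzero rings and invoke part (1): each of the three factors, in particular $R/M_3 \times R/M_4$, must be a field. But $R/M_3 \times R/M_4$ is a product of two nonzero rings and so has the nonzero nonunit $(1,0)$, hence is not a field, a contradiction. Therefore $R$ cannot have four distinct maximal ideals; that is, $R$ has at most three maximal ideals.

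I expect the main obstacle to be the descent step in part (2): one must check carefully that the defining inequality $abc \neq 0$ of Definition \ref{dfn} is respected under reduction modulo $J$ (it is, because $\overline{a}\,\overline{b}\,\overline{c} \neq \overline{0}$ forces $abc \notin J$ and hence $abc \neq 0$) and that nonunits in $R/J$ lift to nonunits in $R$. Once the property is known to pass to $R/J$, the grouping of the four fields into three nonzero factors together with part (1) closes the argument cleanly.
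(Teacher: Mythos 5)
Your proposal is correct, and while your part (1) coincides with the paper's argument, your part (2) takes a genuinely different route. For part (1) you use the ideal $aR_1\times\{0\}\times\{0\}$ and the witness triple $(a,1,1)$, $(1,0,1)$, $(1,1,0)$, which is exactly the paper's proof (it takes $J\times\{0\}\times\{0\}$ for a nonzero proper ideal $J$ of $R_1$ and the same triple). For part (2), the paper sets $I=M_1\cap M_2\cap M_3$ and argues through the $2$-absorbing literature: every weakly $1$-absorbing prime ideal is weakly $2$-absorbing, $I$ is not $2$-absorbing since it has three minimal primes (\cite[Theorem 2.5]{Badawi}), hence $I^3=\{0\}$ by \cite[Theorem 2.4]{Darani}, and the contradiction with the fourth maximal ideal is left implicit (from $I^3=\{0\}$ one gets $I\subseteq\sqrt{0}\subseteq M_4$, forcing some $M_i=M_4$). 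You instead stay entirely inside the weakly $1$-absorbing prime framework: form $J=M_1\cap M_2\cap M_3\cap M_4$, apply the Chinese Remainder Theorem to get $R/J\cong R/M_1\times R/M_2\times R/M_3\times R/M_4$, verify that the hypothesis descends to $R/J$, and then apply part (1) to the three-factor grouping $R/M_1\times R/M_2\times(R/M_3\times R/M_4)$, where the last factor cannot be a field. Your descent step is verified correctly: proper ideals of $R/J$ are of the form $I/J$ with $I\supseteq J$ proper in $R$, a nonzero product $\overline{a}\,\overline{b}\,\overline{c}$ lifts to $abc\in I\setminus J$, hence $abc\neq 0$, and nonunits of $R/J$ have nonunit representatives since images of units are units. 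What your approach buys: it is self-contained (no appeal to the $2$-absorbing theory), it makes the lemma internally coherent by deriving (2) from (1), and it makes the final contradiction explicit; the quotient-stability observation is also of independent interest. What the paper's approach buys: brevity, given the two cited theorems, and it needs only three of the four maximal ideals to run.
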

\begin{proof}
{$(1)$ Let $R = R_1 \times R_2 \times R_3$ be a decomposable ring, where $R_1$, $R_2$ and $R_3$ are rings such that every proper ideal of $R$ is weakly $1$-absorbing prime. Without loss of generality, assume that $R_1$ is not a field. Let $J$ be a non-zero proper ideal of $R_1$ and $m$ be a non-zero element of $J$. Suppose that $I = J \times \{0\} \times \{0\}$. Then $I$ is a weakly $1$-absorbing prime ideal of $R$. Since $(0,0,0) \neq (m, 1, 1)(1, 0, 1)(1, 1, 0) = (m, 0, 0) \in I$ and both $(m, 1, 1)(1, 0, 1) = (m, 0, 1) \notin I$ and $(1,1,0) \notin I$, we have a contradiction. Hence $R_1, R_2, R_3$ are fields.

$(2)$ Let $M_1, M_2, M_3, M_4$ be distinct maximal ideals of $R$ and  $I = M_1\cap M_2\cap M_3$. Since every weakly $1$-absorbing prime is weakly $2$-absorbing, and by \cite[Theorem 2.5]{Badawi}, $I$ is not a $2$-absorbing ideal of $R$, we conclude that $I$ is a weakly $2$-absorbing ideal of $R$ that is not a $2$-absorbing ideal of $R$. Therefore, by
\cite[Theorem 2.4]{Darani}, $I^3 = \{0\}$, a contradiction. Thus $R$ has at most
three distinct maximal ideals.
}
\end{proof}
We end this paper with the following result.
\begin{thm} \label{yas6}
Let $R$ be a ring. If $(1)$ every proper ideal of $R$ is weakly $1$-absorbing prime, then either $(2)$ $R$ is a local ring with a unique maximal ideal $M$ such that $M^3 = 0$ or $(3)$ $R = R_1\times R_2$, where $R_1$ is a local ring with a unique
maximal ideal $M_1$ such that $M_1^2 = 0$ and $R_2$ is a field or $(4)$ $R = R_1 \times R_2 \times R_3$, where $R_1, R_2, R_3$ are fields. Furthermore, $(2) \Rightarrow (1)$ but $(3) \Rightarrow (1)$ and $(4) \Rightarrow (1)$ need not necessarily be true.
\end{thm}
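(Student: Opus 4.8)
The plan is to first prove that $R$ decomposes as a finite direct product of local rings, and then read off each of the three alternatives from the results already established for local rings and for decomposable rings.

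The central step is to show $J(R)^{3}=\{0\}$. Given any $a,b,c\in J(R)$, each is a nonunit, so $abc\in J(R)$ is a nonunit and $Rabc$ is a proper ideal; by hypothesis it is weakly $1$-absorbing prime, and Lemma \ref{44} forces $abc=0$. Hence $J(R)^{3}=\{0\}$. By Lemma \ref{yas5}$(2)$, $R$ has at most three maximal ideals, say $M_{1},\dots ,M_{n}$ with $n\le 3$, and distinct maximal ideals are comaximal. Consequently $M_{1}^{3},\dots ,M_{n}^{3}$ are pairwise comaximal, so that
$$\bigcap_{i}M_{i}^{3}=\prod_{i}M_{i}^{3}=\Big(\prod_{i}M_{i}\Big)^{3}=\Big(\bigcap_{i}M_{i}\Big)^{3}=J(R)^{3}=\{0\}.$$
The Chinese Remainder Theorem then gives $R\cong \prod_{i=1}^{n}R/M_{i}^{3}$, and each factor $R/M_{i}^{3}$ is local (its only maximal ideal comes from $M_i$). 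Since property $(1)$ passes to isomorphic rings, the product again has all proper ideals weakly $1$-absorbing prime. I expect this decomposition to be the main obstacle: turning ``at most three maximal ideals'' into an honest product decomposition is precisely what requires the nilpotency $J(R)^{3}=\{0\}$ together with comaximality; everything afterward is an appeal to earlier results.

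Next I would split on $n$. If $n=1$ then $R$ is local and Theorem \ref{442} gives $M^{3}=\{0\}$, which is alternative $(2)$. If $n=2$ then $R\cong R_{1}\times R_{2}$ with both factors local, and Theorem \ref{yas4} shows $M_{1}^{2}=M_{2}^{2}=\{0\}$ with one factor a field; relabelling so that $R_{2}$ is the field gives alternative $(3)$. If $n=3$ then $R\cong R_{1}\times R_{2}\times R_{3}$ with all factors local, and Lemma \ref{yas5}$(1)$ forces all three to be fields, which is alternative $(4)$. This proves $(1)\Rightarrow (2)\text{ or }(3)\text{ or }(4)$.

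For $(2)\Rightarrow (1)$ I would simply invoke the ``if'' direction of Theorem \ref{442}. Finally, to see that $(3)\Rightarrow (1)$ and $(4)\Rightarrow (1)$ may fail, I would exhibit explicit counterexamples. For $(3)$, take $R=\mathbb{Z}_{4}\times \mathbb{Z}_{2}$, which has the required form with $R_{1}=\mathbb{Z}_{4}$ local and $M_{1}^{2}=\{0\}$ and $R_{2}=\mathbb{Z}_{2}$ a field; here $I=\{0,2\}\times\{0\}$ is a proper ideal with $(0,0)\neq (1,0)(1,0)(2,1)=(2,0)\in I$ while $(1,0)(1,0)=(1,0)\notin I$ and $(2,1)\notin I$, so $I$ is not weakly $1$-absorbing prime. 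For $(4)$, take $R=\mathbb{Z}_{2}\times\mathbb{Z}_{2}\times\mathbb{Z}_{2}$ and the proper ideal $I=\mathbb{Z}_{2}\times\{0\}\times\{0\}$, which fails to be weakly $1$-absorbing prime exactly as in Example \ref{ex}$(2)$.
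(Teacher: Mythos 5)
Your proof is correct, and its skeleton matches the paper's: bound the number of maximal ideals via Lemma \ref{yas5}(2), show $J(R)^3=\{0\}$, decompose $R$ by the Chinese Remainder Theorem, and then quote Theorem \ref{442}, Theorem \ref{yas4} and Lemma \ref{yas5}(1) in the three cases. Two genuine differences are worth recording. First, you establish $J(R)^3=\{0\}$ once and for all from Lemma \ref{44} before any case split (justifying the passage from pairwise comaximality of the $M_i$ to $\bigcap_i M_i^3=J(R)^3$, which the paper only asserts); the paper instead re-derives the nilpotency separately in each case, and in the three-maximal-ideal case it abandons Lemma \ref{44} in favor of a detour through weakly $2$-absorbing ideals, quoting \cite[Theorem 2.5]{Badawi} and \cite[Theorem 2.4]{Darani} to get $J(R)^3=\{0\}$. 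Your uniform argument is simpler and keeps the proof self-contained within the paper's own lemmas. Second, and more importantly, your counterexample for $(3)\not\Rightarrow(1)$ repairs a flaw in the paper's: the paper takes $0\neq a\in J\subseteq M_1$ and uses the triple $(a,1)(a,1)(1,0)$, whose product is $(a^2,0)$; but the hypothesis $M_1^2=\{0\}$ forces $a^2=0$, so this product is zero and witnesses nothing. Your triple $(1,0)(1,0)(2,1)=(2,0)$ in $\mathbb{Z}_4\times\mathbb{Z}_2$ (generically, $(1,0)(1,0)(a,1)=(a,0)$) is a correct witness: it is a nonzero element of $M_1\times\{0\}$, while $(1,0)(1,0)=(1,0)\notin M_1\times\{0\}$ and $(2,1)\notin M_1\times\{0\}$. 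Your counterexample for $(4)$ coincides with the paper's, which simply reuses Example \ref{ex}(2).
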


\begin{proof}
{ If $R$ satisfies condition $(2)$, then the condition $(1)$ follows from Theorem \ref{442}. Suppose $R$ satisfies condition $(3)$, i.e., $R = R_1\times R_2$, where $R_1$ is local with a unique maximal ideal $M_1$ such that $M_1^2 = 0$ and $R_2$ is a field. Since $R_2$ is a field and every proper ideal of $R_1$ is a $1$-absorbing prime, we conclude that the ideals $\{0\} \times R_2$ and $R_1 \times \{0\}$ are weakly $1$-absorbing prime ideals of $R$. Suppose that $J$ is a non-zero proper ideal of $R_1$. It follows from Theorem \ref{311} that $J \times R_2$ is a weakly $1$-absorbing prime ideal, since $J$ is a $1$-absorbing prime ideal of $R_1$, by Corollary \ref{3.3.3}. Hence, it is enough to prove that $J \times \{0\}$ is a weakly $1$-absorbing prime ideal of $R$. Let $0\neq a\in J$. Then $(0, 0, 0)\neq (a, 1)(a, 1)(1, 0) = (a^2, 0, 0)\in J \times \{0\}$ and both $(a^2, 1) \notin J \times \{0\}$ and $(1, 0) \notin J \times \{0\}$. Thus the condition $(3) \Rightarrow (1)$ need not necessarily be true. Suppose that $R$ satisfies condition $(4)$, i.e., $R = R_1 \times R_2 \times R_3$, where $R_1, R_2, R_3$ are fields. By taking $I = R_1 \times \{0\} \times \{0\}$, where $R_1$ is a field, one can easily see that the ideal $I$ is not weakly $1$-absorbing prime, because $(0, 0, 0)\neq (1, 0, 1)(1, 0, 1)(1, 1, 0) = (1, 0, 0)\in I$ and neither $(1, 0, 1)(1, 0, 1) = (1, 0, 1)\in I$ nor $(1, 1, 0)\in I$, showing that the condition $(4) \Rightarrow (1)$ need not necessarily be true. To complete the proof, assume that every proper ideal of $R$ is weakly $1$-absorbing prime. It follows from Lemma \ref{yas5} part $(2)$ that $R$ has at most three maximal ideals, and so if $R$ is local with a unique maximal $M$, then by Theorem \ref{442}, $M^3 = \{0\}$. Thus $R$ satisfies $(2)$. If $R$ has two maximal ideals $M_1$ and $M_2$, then $J(R) = M_1\cap M_2$ is a weakly $1$-absorbing prime ideal of $R$. Since every proper ideal of $R$ is weakly $1$-absorbing prime, $Rabc$ is  weakly $1$-absorbing prime, for every $a,b,c\in J(R)$. Hence $abc = 0$, by Lemma \ref{44}, and this means that $J(R)^3 = M_1^3\cap M_2^3 = \{0\}$. Thus, by the Chinese Remainder Theorem $R \cong R/M_1^3 \times R/M_2^3$. But $R/M_1^3$ and $R/M_2^3$ are local rings and every proper ideal of $R$ is  weakly $1$-absorbing prime. Hence, by Theorem \ref{yas4},  either $R/M_1^3$ or $R/M_2^3$ is a field and $N^2 = J^2 = 0$, where $N$ and $J$ are the maximal ideals of $R/M_1^3$ and $R/M_2^3$, respectively. Thus $R$ satisfies $(3)$. If $R$ has three maximal ideals $M_1, M_2$ and $M_3$, then
$J(R) = M_1\cap M_2\cap M_3$ is a weakly $1$-absorbing prime ideal of $R$ (weakly $2$-absorbing ideal). But $J(R)$ is not a $2$-absorbing, hence, by \cite[Theorem 2.4]{Darani}, $J(R)^3 = M_1^3\cap M_2^3 \cap M_3^3 = \{0\}$. Thus, by the Chinese Remainder Theorem, $R \cong R/M_1^3 \times R/M_2^3 \times R/M_3^3$. Since every proper ideal of $R$ is weakly $1$-absorbing prime, Lemma \ref{yas5} implies that $R/M_1^3$, $R/M_2^3$ and $R/M_3^3$ are fields. Thus $R$ satisfies $(4)$.
}
\end{proof}

\end{document}